\documentclass[a4paper, 10pt, reqno]{amsart}
\usepackage{graphicx, color} 
\usepackage{amsmath, amsthm, amsfonts, amssymb, amscd}
\usepackage[T1]{fontenc}
\usepackage{comment}
\usepackage{amsmath}
\usepackage{tikz}
\usetikzlibrary{calc}
\usepackage{latexsym,amsfonts,amssymb,amsthm,euscript, subfigure}\usepackage{amsmath}
\usepackage{tikz}
\usetikzlibrary{calc}
\usepackage{latexsym,amsfonts,amssymb,amsthm,euscript, subfigure}
\usepackage{graphicx}
\usepackage{hyperref}
\usepackage{cleveref}
\usepackage{float}
\usepackage{color,fancybox}
\usepackage[normalem]{ulem}  
\usepackage{latexsym}
\usepackage{enumitem}
\usepackage{comment}
\usepackage[british]{babel}
\usepackage{indentfirst}

\usepackage{mathrsfs}





\theoremstyle{plain}
\newtheorem{theorem}{Theorem}[section]

\newtheorem{corollary}{Corollary}[section]
\newtheorem{definition}{Definition}[section]
\newtheorem{example}{Example}[section]

\newtheorem{lemma}{Lemma}[section]

\newtheorem{proposition}{Proposition}[section]
\newtheorem{remark}{Remark}[section]

\newtheorem{maintheorem}{Theorem}

\newtheorem{co}{Corollary}

\begin{document}

\title[Expanding measure has nonuniform specification property on RDS]{Expanding measure has nonuniform specification property on random dynamical system}

\author{R. Bilbao}
	\address{Rafael A. Bilbao, Escuela de Matem\'atica y Estat\'istica, UPTC, Sede Central del Norte Av. Central del Norte 39 - 115, cod. 150003 Tunja, Boyac\'a, Colombia} 
	\email{rafael.alvarez@uptc.edu.co}
	\urladdr{https://orcid.org/0000-0001-8223-9434}

	\address{Universidade Federal de Alagoas, Instituto de Matem\'atica - UFAL, Av. Lourival Melo Mota, S/N
	Tabuleiro dos Martins, Maceio - AL, 57072-900, Brasil} 

\subjclass[2010]{37A05, 37C05, 37C25}
\keywords{Random Dynamical Systems; Expanding measure; Nonuniform specification property }

\pagenumbering{arabic}

\begin{abstract}
In the present paper, we study the distribution of the return points in the fibers for a RDS (random dynamical systems) non uniformly expanding preserving an ergodic probability, we also show the abundance of nonlacunarity of hyperbolic times that are obtained along the orbits through the fibers. We conclude that any ergodic measure with positive Lyapunov exponents satisfies the nonuniform specification property between fibers. As consequences, we prove that any expanding measure is the limit of probability measure whose measures of disintegration on the fibers are supported by a finite number of return points and we prove that the average of the measures on the fibers corresponding to a disintegration, along the orbit $(\theta^n(w))_{n \geq 0}$ in the base dynamics is the limit of Dirac measures supported in return orbits on the fibers.
\end{abstract}

\maketitle
\section{Introduction}	
Given a dynamical system, we wish to know if there exist periodic points and understand their distribution on the underlying space. 
 In the following, we will mention some results in the deterministic dynamics. For a systems  $f:M \to M$  uniformly hiperbolic, Bowen established in \cite{Bowen2} that the asymptotic exponential growth of the set $P_{n}(f)$ of periodic points of period $n$ is determined by the topological entropy 

\[
\lim_{n \to \infty} \dfrac{\log P_{n}(f)}{n} = h_{top}(f).
\]

He introduced the important notion of \texttt{specification by periodic orbits} and proved a number of important results concerning the uniqueness and the ergodic properties of equilibrium states, asymptotic growth and the limit distribution of periodic orbits and so on, for Axiom A diffeomorphisms and flows \cite{Bowen2}. \\
There exists, various generalizations of the specification property among them are, almost specification, relative specification, periodic weak specification, weak specification, relative weak specification. The connection between them can be found in \cite{Dominik}. These kinds of specification given tools of great utility in exploring the topological structure and statistical behavior of some systems.

On the other hand, when we investigate the strong specification property of trajectories by periodic orbits, we realize that it is very strange to happen in the RDS (random dynamical system) context. Nevertheless, the authors Gundlach and Kifer in \cite{Kifer2} employ a random version that is based on approximation by arbitrary orbits and introduce notions of expansiveness, conjugation, specification for random bundle transformations and derive the uniqueness of equilibrium states for a large class of functions measures. \\
Problems related to the random dynamics of a skew product, where the dynamics in the fibers is non-uniform expanding have also been studied by J. Alves and V. Ara\'ujo \cite{Araujo}, taking random perturbations of nonuniformly expanding maps where they give sufficient conditions and necessary conditions for the stochastic stability of nonuniformly expanding maps either with or without critical sets. Arbieto, Matheus and Olivera \cite{Oliveira2} show the existence of equilibrium states for some random non-uniformly expanding maps. Another result, is the work of V. Ara\'ujo and J. Solano \cite{AraujoSolano}, they show the existence of absolutely continuous invariant probability measures for random one-dimensional dynamical systems with asymptotic expansion. More recently,  Stadlbauer, Suzuki and Varandas \cite{Stadlbauer-Suzuki-Varandas}  develop a quenched thermodynamic formalism for a wide class of random maps with non-uniform expansion, where no Markov structure, no uniformly bounded degree or the existence of some expanding dynamics is required and finally there is the work by  Bilbao and Ramos \cite{Alvarez2}, here it is considered a robust class of random non-uniformly expanding
local homeomorphisms and H\"older continuous potentials with small variation, proved that these equilibrium states and the random topological pressure vary continuously in this setting.
However, our point of view is quite different: we study a kind of \texttt{ weak specification property} for a non-uniform system class  on a skew product (see (\ref{specificationproper})).

\medskip
Back to the deterministic case, Oliveira uses this type of weak specification in \cite{Oliveira4} and proves that, given an endomorphism preserving an ergodic probability with positive Lyapunov exponents, there are periodic points of period growing sublinearly with respect to the length of almost every dynamical ball.  In particular, it implies that any ergodic measure with positive Lyapunov exponents satisfies the nonuniform specification property. 

{\bf Statements of the main result}. This article is motivated by the results of Oliveira \cite{Oliveira4} and we extend this weak specification property for random dynamical systems $F(w,x)= (\theta(w), f_{w}(x))$, where $\theta$ is an invertible map preserving an ergodic measure $\mathbb{P}$ and $f_w$ is a local homeomorphism of a compact Riemannian manifold exhibiting some non-uniform expansion. We say that $F$ has the {\it specification property between fiber} if there exist $\varepsilon_0 > 0$ such that for all $(w,x)\in X \times M$ and $0< \varepsilon < \varepsilon_0$, $n\geqslant 1$, there exists $(w,p)\in X \times M$ such that
\begin{itemize}
 \item $d(f^{i}_{w}(p),f^{i}_{w}(x))< \varepsilon$, for $i=0,1,...,n$
 \item $p$ has return less than $n+K_{\varepsilon}(n,(w,x))$ (see definition \ref{pontoperio})
\end{itemize}

\medskip
The system $F:X\times M \to X\times M$, will be conditioned to the hypotheses (I) - (V) and (H1)-(H3), will be stated in section 2 of preliminaries. The hypotheses (H1)-(H3) consists of regularity of the generating functions $f_w:M\to M$ with $w\in X$, which implies the existence of hyperbolic times on fibers. We obtain that this type of system satisfies the nonuniform weak specification property. For the skew product, it means that given a ball on fiber, there exists a point that returns after $n$ iterations, i.e. given $\varepsilon > 0$ and $B_{\varepsilon}(x) \subset M$, there exist  $p \in B_{\varepsilon}(x)\subset M$ and $ n\in \mathbb{N}$, such that $f^{n}_{w}(p)=p$ for all $w\in X$. In summary, the following results are obtained:

\begin{maintheorem}
 \label{theoremprincipal}
 Any $F-$ invariant expanding measure $\mu$ satisfies the nonunirform specification property. 
\end{maintheorem}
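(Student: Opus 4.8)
The plan is to produce the return point $p$ as the unique fixed point of a uniformly contracting inverse branch of $f^m_w$ at a hyperbolic time $m$ slightly larger than $n$; for this to work I first need that such hyperbolic times are abundant, non-lacunary, and compatible (in density) with the times at which the fibre orbit of $x$ returns close to $x$.

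Fix $\sigma\in(0,1)$ and $\delta_1>0$ as furnished by (H1)--(H3): whenever $m$ is a $(\sigma,\delta_1)$-hyperbolic time for $x$ along the fibre over $w$, $f^m_w$ admits an inverse branch $g=g_m\colon B_{\delta_1}(f^m_w(x))\to M$ with $g(f^m_w(x))=x$ such that, for every $0\le i\le m$, the map $f^i_w\circ g_m\colon B_{\delta_1}(f^m_w(x))\to M$ sends $f^m_w(x)$ to $f^i_w(x)$ and is $\sigma^{(m-i)/2}$-Lipschitz. Since $\mu$ is $F$-invariant and expanding, a random version of Pliss's lemma (available under (H1)--(H3)) gives, for $\mu$-a.e.\ $(w,x)$, an infinite set $\mathcal H(w,x)=\{m_1<m_2<\cdots\}$ of such hyperbolic times with positive lower density, which the nonlacunarity result established earlier (\emph{abundance of nonlacunary hyperbolic times}) sharpens to $m_{k+1}/m_k\to1$. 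In parallel, Poincar\'e recurrence for $F$, ergodicity of $\mathbb P$, and the Birkhoff theorem applied to the fibre dynamics yield, for $\mu$-a.e.\ $(w,x)$ and every $\rho>0$, positive lower density for $\mathcal R_\rho(w,x)=\{m\ge1:\ d(f^m_w(x),x)<\rho\}$.

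The core difficulty — the step I expect to be the main obstacle — is to combine these two families of good times while keeping the gaps sublinear, that is, to show that for $\mu$-a.e.\ $(w,x)$ the set $\mathcal G(w,x):=\mathcal H(w,x)\cap\mathcal R_{\delta_1/2}(w,x)$ is infinite and non-lacunary. Since the two families are correlated, a soft density count ($\bar d(\mathcal H)+\bar d(\mathcal R_{\delta_1/2})>1$ need not hold) does not suffice, and one must appeal to the quantitative description of the \emph{distribution of return points in the fibres} proved earlier. Granting it, for each $0<\varepsilon<\varepsilon_0$ there is a sublinear function $n\mapsto K_\varepsilon(n,(w,x))$ such that for every $n$ one can pick $m=m(n)\in\mathcal G(w,x)$ with $n+C(\varepsilon)\le m\le n+K_\varepsilon(n,(w,x))$, where $C(\varepsilon)$ is a constant depending only on $\varepsilon,\sigma,\delta_1$, chosen so that $\sigma^{m/2}\delta_1<\varepsilon$ and $\sigma^{m/2}<1/2$ for all $m\ge C(\varepsilon)$; adding the fixed amount $C(\varepsilon)$ to $K_\varepsilon$ does not affect its sublinearity. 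A point worth stressing is that the hyperbolic times are taken at the \emph{fixed} scale $\delta_1$, not at scale $\varepsilon$; the $\varepsilon$-closeness of the first $n$ iterates is bought below merely by insisting that $m-n\ge C(\varepsilon)$.

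Finally, the contraction argument. Given $(w,x)$, $0<\varepsilon<\varepsilon_0$ and $n\ge1$, take $m=m(n)$ and the branch $g=g_m$ on $B_{\delta_1}(f^m_w(x))$. As $d(g(f^m_w(x)),f^m_w(x))=d(x,f^m_w(x))<\delta_1/2$, $g$ is $\sigma^{m/2}$-Lipschitz and $\sigma^{m/2}<1/2$, a short estimate yields $\rho<\delta_1$ with $g(\overline{B_\rho(f^m_w(x))})\subseteq\overline{B_\rho(f^m_w(x))}$; by the Banach fixed point theorem there is $p\in\overline{B_\rho(f^m_w(x))}$ with $g(p)=p$, hence $f^m_w(p)=p$, so $p$ has return at most $m\le n+K_\varepsilon(n,(w,x))$ in the sense of Definition \ref{pontoperio}. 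For the shadowing, $f^i_w(p)=(f^i_w\circ g_m)(p)$ and $f^i_w(x)=(f^i_w\circ g_m)(f^m_w(x))$, so $d(f^i_w(p),f^i_w(x))\le\sigma^{(m-i)/2}\rho$ for $0\le i\le m$; for $0\le i\le n$ we have $m-i\ge C(\varepsilon)$, whence $d(f^i_w(p),f^i_w(x))\le\sigma^{(m-i)/2}\delta_1\le\sigma^{C(\varepsilon)/2}\delta_1<\varepsilon$. Therefore $(w,p)$ meets both requirements of the specification property between fibres, and since the construction works for $\mu$-a.e.\ $(w,x)$, the expanding measure $\mu$ satisfies the nonuniform specification property. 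Besides the intersection step, the only other points requiring care are the uniform bounded-distortion bound behind the factor $\sigma^{(m-i)/2}$ (to be read off from (H1)--(H3)) and the routine verification that $\rho$ can always be chosen with $\rho<\delta_1$.
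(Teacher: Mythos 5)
Your closing mechanism is genuinely different from the paper's, and the difference falls exactly where your argument breaks down. The paper's proof (Lemma \ref{exispontoperio}) never asks the orbit of $x$ to return near $x$: at the hyperbolic time $n_{k+1}$ the pre-ball $V(z)$ is mapped onto a ball of \emph{fixed} radius $\gamma_w$, and hypothesis (III) (topological exactness) supplies a uniform $N(\varepsilon)$ such that after at most $N(\varepsilon)$ further iterates this ball covers $V(z)$; a fixed point argument then closes the orbit with $K(n,(w,x),\varepsilon)\le n_{k+1}-n+N(\varepsilon)$, and sublinearity follows from nonlacunarity of $(n_k)_k$ alone. You instead require an $m$ that is \emph{simultaneously} a hyperbolic time and a time with $d(f^m_w(x),x)<\delta_1/2$, and you need the set of such $m$ to be infinite with sublinear gaps. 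As you yourself note, positive density of each family does not force a nonempty intersection (the density $\alpha$ of hyperbolic times and the return frequency to $B(x,\delta_1/2)$ can both be small), and the ``quantitative description of the distribution of return points in the fibres proved earlier'' that you invoke to bridge this does not exist, neither in your write-up nor in the paper: the phrase in the abstract refers to the \emph{conclusion} of Theorem A, not to an available input. So the central step of your proof is assumed rather than proved, and your argument nowhere uses hypothesis (III), which is precisely the hypothesis designed to make the closing possible.

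The repair is to drop the recurrence of the orbit of $x$ altogether and use (III) as the paper does: once $m=n_{k+1}$ is a hyperbolic time, $f^{m}_{w}(B_{m,w}(x,\gamma_w))$ is a ball of radius $\gamma_w$ independent of $m$, and exactness gives some $j\le N(\varepsilon)$ with $f^{m+j}_{w}(B_{m,w}(x,\gamma_w))\supset B_{m,w}(x,\gamma_w)$, so a fixed point of $f^{m+j}_{w}$ in this set is the desired return point with $K=n_{k+1}+j-n$. Everything else in your proposal --- Pliss, positive density and nonlacunarity of hyperbolic times (Corollary \ref{seqtempononlacu}), the backward contraction $d(f^{m-k}_{w}(z),f^{m-k}_{w}(x))\le e^{-ck/2}\,d(f^{m}_{w}(z),f^{m}_{w}(x))$ of Lemma \ref{lemacontra} giving the $\varepsilon$-shadowing on the first $n$ iterates, and the estimate $K/n\le (n_{k+1}-n_k+N(\varepsilon))/n_k\to 0$ --- matches the paper and is sound.
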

 
 A interesting consequence of Theorem A above, is following corollary, we prove that any expanding measure is the limit of invariant measure whose measures of disintegration on the fibers are supported by a finite number of return points.

\begin{co}
 \label{corola1}
 Given $\mu \in \mathcal{M}_{\mathbb{P}}^{1}(F)$ and $(\mu_w)_{w\in X}$ its respective disintegration, then each $\mu_w$ is accumulated in the weak$ ^\ast$ topology by probability measures $\nu_{w,r} = \sum_{j=1}^{r} k_{j,w} \delta_{p_{j,w}}$, with $\sum_{j=1}^{r} k_{j,w} = 1$ supported in a finite amount of return points $(p_{j,w})_{1\leq j \leq r}$ on the fiber  $M_w = \{x: \ (w,x)\in X\times M \}$.   Besides, given $\varphi \in C^0(M)$ the map $\displaystyle w\in X \mapsto \int_M \varphi  \ d\nu_{w,r}$ is measurable. Therefore, $\mu$ is accumulated in the weak$^{\ast}$ topology for the measures $\displaystyle \nu = \int \nu_{w,r} d\mathbb{P}(w)$. From here we can construct a $\tau$ $F$-invariant measure that is close to $\mu$ in the weak$^{\ast}$ topology.
\end{co}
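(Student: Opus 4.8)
\section*{Proof proposal for Corollary~\ref{corola1}}

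The plan is to approximate $\mu$ fiberwise, using Theorem~\ref{theoremprincipal} to replace $\mu_w$-typical points by nearby return points, and then to integrate the resulting fiber measures against $\mathbb{P}$. Fix a metric $D$ inducing the weak$^\ast$ topology on the space of Borel probabilities of $M$, and for each scale $\varepsilon>0$ a finite Borel partition $\mathcal{Q}=\{Q_1,\dots,Q_r\}$ of $M$ with $\operatorname{diam}(Q_j)<\varepsilon$ and $\mu_w(\partial Q_j)=0$ for $\mathbb{P}$-a.e.\ $w$ (e.g.\ a disjointification of a finite cover by balls of a generic radius, the generic choice being legitimate by Fubini). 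For $\mathbb{P}$-a.e.\ $w$ and each $j$ with $\mu_w(Q_j)>0$, choose $x_{j,w}\in Q_j\cap\operatorname{supp}(\mu_w)$ for which the conclusion of Theorem~\ref{theoremprincipal} is available; then, for a suitably large $n=n(r)$, there is a return point $p_{j,w}\in M$ of $f_w$ with $d\big(f^i_w(p_{j,w}),f^i_w(x_{j,w})\big)<\varepsilon$ for $0\le i\le n$ (in particular $d(p_{j,w},x_{j,w})<\varepsilon$) and return at most $n+K_\varepsilon(n,(w,x_{j,w}))$. Put $k_{j,w}:=\mu_w(Q_j)$ and $\nu_{w,r}:=\sum_j k_{j,w}\,\delta_{p_{j,w}}$; then $\sum_j k_{j,w}=1$, the measure $\nu_{w,r}$ is supported on the finite set of return points $(p_{j,w})_j$, and $D\big(\nu_{w,r},\sum_j k_{j,w}\delta_{x_{j,w}}\big)\le\varepsilon$, while $\sum_j k_{j,w}\delta_{x_{j,w}}\to\mu_w$ weak$^\ast$ as $\varepsilon\to0$ by the standard estimate on Lipschitz test functions. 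Hence $\nu_{w,r}\to\mu_w$ weak$^\ast$ as $r\to\infty$.

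For measurability, note that $\mathcal{Q}$ does not depend on $w$, so the weights $k_{j,w}=\mu_w(Q_j)$ are measurable in $w$ because the disintegration $(\mu_w)_w$ is measurable (Rokhlin). The selections $w\mapsto x_{j,w}$ and $w\mapsto p_{j,w}$ are obtained from a measurable selection theorem of Kuratowski--Ryll-Nardzewski/Jankov--von Neumann type: $w\mapsto\operatorname{supp}(\mu_w)\cap\overline{Q_j}$ is a measurable closed-valued multifunction, and, for fixed $w$, the set of return points of $f_w$ of return at most $n+K_\varepsilon(n,(w,x_{j,w}))$ that $\varepsilon$-shadow the orbit segment $\big(f^i_w(x_{j,w})\big)_{0\le i\le n}$ is a nonempty (Theorem~\ref{theoremprincipal}) Borel subset of $M$ depending measurably on $w$, hence admits a measurable section. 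Consequently $w\mapsto\int_M\varphi\,d\nu_{w,r}=\sum_j k_{j,w}\,\varphi(p_{j,w})$ is measurable for every $\varphi\in C^0(M)$, and Rokhlin's disintegration theorem yields a Borel probability $\nu_r:=\int\nu_{w,r}\,d\mathbb{P}(w)$ on $X\times M$ whose base marginal is $\mathbb{P}$ and whose fiber measures are supported on finitely many return points. For $\psi\in C^0(X\times M)$, Fubini together with bounded convergence (using $\big|\int\psi(w,\cdot)\,d\nu_{w,r}-\int\psi(w,\cdot)\,d\mu_w\big|\to0$ for $\mathbb{P}$-a.e.\ $w$ and the bound $\|\psi\|_\infty$) give $\int\psi\,d\nu_r\to\int\psi\,d\mu$, that is, $\nu_r\to\mu$ weak$^\ast$ as $r\to\infty$.

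It remains to manufacture a genuinely $F$-invariant $\tau$ close to $\mu$, and this is the step I expect to be the real obstacle. A weak$^\ast$ limit point $\tau$ of the Ces\`aro averages $\frac1N\sum_{n=0}^{N-1}F^n_\ast\nu_r$ does belong to $\mathcal{M}_{\mathbb{P}}^{1}(F)$ (by compactness of that set and $\theta$-invariance of $\mathbb{P}$), but it need not be close to $\mu$, since the moduli of continuity of $\psi\circ F^n$ deteriorate with $n$. The cure is to average on the fibers: writing $F^n_\ast\nu_r=\int\delta_u\otimes\big(f^n_{\theta^{-n}u}\big)_\ast\nu_{\theta^{-n}u,r}\,d\mathbb{P}(u)$ and using the equivariance $\big(f^n_{\theta^{-n}u}\big)_\ast\mu_{\theta^{-n}u}=\mu_u$ of the disintegration, the fiber measure of the $N$-th Ces\`aro average over $u$ is $\frac1N\sum_{n<N}\big(f^n_{\theta^{-n}u}\big)_\ast\nu_{\theta^{-n}u,r}$, to be compared with $\frac1N\sum_{n<N}\big(f^n_{\theta^{-n}u}\big)_\ast\mu_{\theta^{-n}u}=\mu_u$. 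One then has to show that the shadowing produced by Theorem~\ref{theoremprincipal}, together with the bounded distortion of $f^n_w$ along the hyperbolic times established earlier, keeps $\big(f^n_{\theta^{-n}u}\big)_\ast\nu_{\theta^{-n}u,r}$ within a distance tending to $0$ (as $r\to\infty$, uniformly in $n$ on the relevant scales) of $\mu_u$, so that the error does not amplify under iteration. Granting this non-amplification estimate --- which is the delicate point, and is precisely where the distribution of hyperbolic times on fibers from the body of the paper is needed --- a diagonal choice of $r=r(\delta)$ and of a subsequence $N_k\to\infty$ produces an $F$-invariant $\tau\in\mathcal{M}_{\mathbb{P}}^{1}(F)$ with $D(\tau,\mu)<\delta$, completing the proof.
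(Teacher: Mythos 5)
Your construction of the fiber measures $\nu_{w,r}$ is essentially the paper's: cover $M$ by $\varepsilon$-balls, disjointify into a partition, take weights $k_{j,w}=\mu_w(Q_j)$, and replace a point of each cell by a nearby return point supplied by Theorem~\ref{theoremprincipal}; the estimate $\bigl|\int\varphi\,d\mu_w-\int\varphi\,d\nu_{w,r}\bigr|\le\varepsilon$ and the passage to $\nu=\int\nu_{w,r}\,d\mathbb{P}(w)$ via Fubini are the same. On measurability you are in fact more careful than the paper, which merely exhibits a map $g:X\to M$ with $g(w)\in P_{ret}(f_w)$ and asserts its measurability; your appeal to a measurable selection theorem for the closed-valued multifunction $w\mapsto\operatorname{supp}(\mu_w)\cap\overline{Q_j}$ and for the set of shadowing return points is the right way to justify that step, though you should still check that the latter set-valued map has a measurable graph (the quantity $K_\varepsilon(n,(w,x_{j,w}))$ depends on $w$ through the hyperbolic-time structure, so this is not entirely automatic).

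The genuine gap is the final claim, the existence of an $F$-invariant $\tau$ close to $\mu$. You correctly diagnose the obstruction: a weak$^\ast$ limit of the Ces\`aro averages $\frac1N\sum_{n<N}F^n_\ast\nu$ is $F$-invariant by the Krylov--Bogolioubov procedure (Theorem~\ref{theoremKB-RDS}), but closeness to $\mu$ requires a bound on $\bigl|\int\varphi\circ F^n\,d\nu-\int\varphi\circ F^n\,d\mu\bigr|$ that is \emph{uniform in $n$}, and the weak$^\ast$ proximity of $\nu$ to $\mu$ established in the first part only controls test functions with a fixed modulus of continuity, while the modulus of $\varphi\circ F^n$ degrades with $n$. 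However, you do not close this gap: the ``non-amplification estimate'' on $\bigl(f^n_{\theta^{-n}u}\bigr)_\ast\nu_{\theta^{-n}u,r}$ is stated as a hope, not proved, and it is far from clear that shadowing for time $n$ plus bounded distortion at hyperbolic times yields it, since the return points were chosen to shadow only up to a time $n(r)$ fixed in advance, after which their orbits are unconstrained. So as written your proposal establishes everything up to and including the approximation of $\mu$ by $\nu$, but not the last sentence of the corollary. For what it is worth, the paper's own proof of that sentence simply asserts $\bigl|\int\varphi\circ F^n\,d\nu-\int\varphi\circ F^n\,d\mu\bigr|<\varepsilon$ for all $n\ge0$ ``by the invariance of $\mu$ and Corollary~\ref{corola1}'', which is exactly the unjustified uniform-in-$n$ step you flag; your diagnosis of the difficulty is therefore accurate, but neither your argument nor the paper's actually resolves it.
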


A second consequence, shows that the average of the measures on the fibers corresponding to a disintegration, along an orbit in the base dynamics is the limit of Dirac measures supported in return orbits on the fibers.

\begin{co}
 \label{corola2}
 Given $F-$ invariant expanding ergodic measure $\mu$ and let $(\mu_w)_{w\in X}$ its desintegration in each fiber, then we have the following
 \begin{enumerate}
     \item For all $\varphi \in C^{0}(M)$, we have
\[
\left\lvert \int_{M} \varphi d\mu_{p,w,n+k} -  \int_{X}\int_{M}\varphi d\mu_{w}d\mathbb{P} \right\rvert < \dfrac{\varepsilon}{2},  \quad \mu-a.e.p..
\] 
where $ \displaystyle \mu_{p,x,n+k}=\frac{1}{n+k}\sum_{j=0}^{n+k-1}\delta_{f^{j}_{w}(p)}$,  with $f^{n+k}_{w}(p)=p$ and  $n$ large enough. 

\item
\[
\left \lvert \dfrac{1}{n}\sum_{j=0}^{n-1} \mu_{\theta^{j}(w)} -  \mu_{p,w,n+k} \right \rvert < \varepsilon, \quad \mu-a.e.p.
\]
for large $n$,  in the $weak^{\ast}$-topology. 

 \end{enumerate}
\end{co}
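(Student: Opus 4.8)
The plan is to obtain both items by feeding the Birkhoff ergodic theorem for the skew product $(X\times M,F,\mu)$ into the nonuniform specification of Theorem~\ref{theoremprincipal}, exploiting that the correction $K_\varepsilon(n,(w,x))$ is sublinear in $n$ (the abundance of nonlacunary hyperbolic times established above). Throughout I would identify $\varphi\in C^0(M)$ with the $F$-observable $(w,x)\mapsto\varphi(x)$; the Rokhlin disintegration then gives $\int_{X\times M}\varphi\,d\mu=\int_X\int_M\varphi\,d\mu_w\,d\mathbb{P}(w)=:\bar\mu(\varphi)$, the measurability of $w\mapsto\int_M\varphi\,d\mu_w$, and, from $F$-invariance of $\mu$, the equivariance $(f^j_w)_\ast\mu_w=\mu_{\theta^j(w)}$ for $\mathbb{P}$-a.e.\ $w$. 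I would also fix once and for all a countable dense family $\{\varphi_m\}\subset C^0(M)$ and a metric $D$ inducing the weak$^\ast$ topology on the (compact, metrizable) space of Borel probability measures on $M$.

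For \textbf{item (1)}: given $\varepsilon>0$ and $\varphi$, first use uniform continuity of $\varphi$ to pick a shadowing scale $0<\varepsilon'<\varepsilon_0$ with $d(a,b)<\varepsilon'\Rightarrow|\varphi(a)-\varphi(b)|<\varepsilon/6$. For $\mu$-a.e.\ $(w,x)$ Birkhoff gives $\frac1n\sum_{j=0}^{n-1}\varphi(f^j_w(x))\to\bar\mu(\varphi)$; for such $(w,x)$, Theorem~\ref{theoremprincipal} (at scale $\varepsilon'$) produces a return point $p$ with $f^{n+k}_w(p)=p$, $k\le K_\varepsilon(n,(w,x))$, and $d(f^i_w(p),f^i_w(x))<\varepsilon'$ for $0\le i\le n$. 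Then I would split
\begin{multline*}
\int_M\varphi\,d\mu_{p,w,n+k}-\frac1n\sum_{j=0}^{n-1}\varphi(f^j_w(x))
=\Big(\tfrac1{n+k}-\tfrac1n\Big)\sum_{j=0}^{n-1}\varphi(f^j_w(p))\\
+\tfrac1{n+k}\sum_{j=n}^{n+k-1}\varphi(f^j_w(p))
+\tfrac1n\sum_{j=0}^{n-1}\big(\varphi(f^j_w(p))-\varphi(f^j_w(x))\big),
\end{multline*}
bound the first two terms by $2\tfrac{k}{n}\|\varphi\|_\infty\le 2\tfrac{K_\varepsilon(n,(w,x))}{n}\|\varphi\|_\infty$ (which is $<\varepsilon/6$ for $n$ large by sublinearity), the third by $\varepsilon/6$ via the choice of $\varepsilon'$, and combine with $|\frac1n\sum_{j=0}^{n-1}\varphi(f^j_w(x))-\bar\mu(\varphi)|<\varepsilon/6$ for $n$ large; this yields the claimed $<\varepsilon/2$. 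Here "$\mu$-a.e.\ $p$" must be read as: for $\mu$-a.e.\ base point $(w,x)$, taking $p$ to be the return point that Theorem~\ref{theoremprincipal} attaches to it (so $p$ depends on $n$, $\varepsilon'$ and $(w,x)$).

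For \textbf{item (2)}: I would first prove $\frac1n\sum_{j=0}^{n-1}\mu_{\theta^j(w)}\to\bar\mu$ in the weak$^\ast$ topology, for $\mathbb{P}$-a.e.\ $w$. By equivariance, $\frac1n\sum_{j=0}^{n-1}\int_M\varphi_m\,d\mu_{\theta^j(w)}=\int_M\big(\frac1n\sum_{j=0}^{n-1}\varphi_m(f^j_w(y))\big)\,d\mu_w(y)$; the integrand converges $\mu_w$-a.e.\ to $\bar\mu(\varphi_m)$ by Birkhoff (for $\mathbb{P}$-a.e.\ $w$) and is dominated by $\|\varphi_m\|_\infty$, so dominated convergence gives convergence of the left-hand side to $\bar\mu(\varphi_m)$ for every $m$, hence $\frac1n\sum_{j=0}^{n-1}\mu_{\theta^j(w)}\to\bar\mu$ in $D$. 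By item (1) applied to each $\varphi_m$, also $\mu_{p,w,n+k}\to\bar\mu$ in $D$. Finally a triangle inequality, $D\big(\tfrac1n\sum_{j=0}^{n-1}\mu_{\theta^j(w)},\mu_{p,w,n+k}\big)\le D\big(\tfrac1n\sum_{j=0}^{n-1}\mu_{\theta^j(w)},\bar\mu\big)+D(\bar\mu,\mu_{p,w,n+k})$, gives $<\varepsilon$ for $n$ large.

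The hard part is not these estimates but ensuring, for $\mu$-a.e.\ orbit, that the specification correction is truly sublinear --- that the return time can be taken as $n+o(n)$ --- which relies on the control of nonlacunary hyperbolic times on fibers underlying Theorem~\ref{theoremprincipal}; a secondary, but necessary, point is to carry the measurability of $w\mapsto\mu_w$ and the equivariance of the disintegration through the Fubini and dominated-convergence steps, and to pin down the meaning of "$\mu$-a.e.\ $p$" as above.
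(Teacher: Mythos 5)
Your proposal is correct and follows essentially the same route as the paper's proof: Birkhoff's ergodic theorem for $F$, shadowing of the first $n$ iterates by the return point supplied by Theorem~A at a scale fixed via uniform continuity of $\varphi$, the $\tfrac{2k}{n}\lVert\varphi\rVert_\infty$ bound coming from sublinearity of $K_\varepsilon(n,(w,x))$, and a triangle inequality for item (2). The only (immaterial) divergence is that for the average $\tfrac1n\sum_{j}\mu_{\theta^j(w)}$ the paper invokes Birkhoff directly on $(\theta,\mathbb{P})$ with the observable $w\mapsto\int_M\varphi\,d\mu_w$, whereas you derive the same limit from the equivariance $(f^j_w)_\ast\mu_w=\mu_{\theta^j(w)}$ together with dominated convergence.
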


\section{Preliminaries}

Let $M$	be a compact and connected manifold with distance $d$ and $\Omega$ the space of local homeomorphisms defined on $M.$ Consider a Lesbesgue space $(X, \mathcal{A},\mathbb{P})$ and an invertible transformation $\theta:X\rightarrow X$ preserving $\mathbb{P}$. We call \emph{random dynamical system} (RDS) any continuous transformation $f:X\to \Omega$ given by $w\mapsto f_w\in \Omega$ such that $(w, x)\mapsto f_w(x)$ is measurable. For every $n\geq 0$ we define 
\[
f^0_w:=Id\ \ , \ \ f^{n}_{w}:=f_{\theta^{n-1}(w)}\circ...\circ f_{\theta(w)}\circ f_{w}\ \ \mbox{and}\ \ f^{-n}_{w}=(f^{n}_{w})^{-1}.
\]
The skew-product generated by the maps $f_w$ is the measurable transformation 
$$F:X\times M \to X\times M \ \ ; \ \ F(w, x)=(\theta(w),f_{w}(x)).$$
In particular, $F^n(w, x)=(\theta^n(w), f^{n}_{w}(x) )$ for every $n\in \mathbb{Z}.$

 Let $\mathcal{M}^{1}_{\mathbb{P}}(X \times M)$ be the collection of all probability measures $\mu$ on $(X \times M, \mathcal{B})$ such that
 \[
  \mu\circ \pi^{-1}_{X}=\mathbb{P} 
 \]
 where $\pi_X:X \times M \rightarrow X$ is the first projection ($\pi(x,y)=x$).
 
 \medskip
 Denote by $\mathcal{M}_{\mathbb{P}}(F)\subset\mathcal{M}^{1}_{\mathbb{P}}(X \times M)$ the set of $F$-invariant measures. Let $\varepsilon_{X}$  the partition on $X$ into singletons and $\pi^{-1}_{X}(\varepsilon_{X})$ the partition whose elements are the fibers. We have that they are measurable. Notice that, by Rokhlin's disintegration theorem \cite{Rokhlin1}, for every $\mu\in \mathcal{M}_{\mathbb{P}}(F)$ there exists a system of sample measures $\{\mu_{w}\}_{w\in X}$ of $\mu$ such that  $$d\mu(w, x)=d\mu_w(x)\ d\mathbb{P}(w).$$
 We say that a $F$-invariant measure $\mu$ is \emph{ergodic} if $(F, \mu)$ is ergodic. In what follows we assume that the system $(\theta, \mathbb{P})$ is ergodic. 
 
\medskip
Let $\mathcal{C}_{\mathbb{P}}(X\times M)$ the space of all measurable functions  $\phi:X\times M \rightarrow \mathbb{R}$ such that $\phi_{w}:M\rightarrow \mathbb{R}$ defined by $\phi_{w}(x)=\phi(w,x)$ is continuous for all $w\in X$. We further define 
\[
 \mathbb{L}^{1}_{\mathbb{P}}(X,C^0(M)):=\biggl\{\phi\in \mathcal{C}_{\mathbb{P}}(X\times M): \lVert \phi\rVert_{1}=\int_{X}\lVert \phi_{w}\rVert_{\infty}\,d\mathbb{P}(w)<+\infty\biggr\}
\]

\medskip

{\bf Hypothesis about the generating maps $f_w$.}  For each $w\in X$ let $f_w:M \rightarrow M$ be a local homeomorphism satisfying: there exists a continuous function $L_w :M\rightarrow\mathbb{R}$ such that for every $x \in M$ we can find a neighborhood $U_x$ where $f_w: U_x \rightarrow f_w(U_x)$ is invertible and $$d(f_w^{-1}(y), f_w^{-1}(z)) \leq L_{w}(x)d(y, z), \ \ \mbox{for all}\ \ y,z \in f_w(U_x).$$
Notice that, the number of preimages $\#f_w^{-1}(x)$ is constant for all $x \in M$. We set $\deg(f_w):=\#f_w^{-1}(x)$ the degree of $f_w$ and assume $\deg(F)= \sup_w \deg(f_w) < \infty$.	

We suppose that there exists an open region $\mathcal{A}_w\subset M$ and constants $\sigma_w>1$ and $L_w >1$ close enough to $1$ such that  
\begin{enumerate}
	\label{cond0}
	\item [(I)] $L_{w}(x) \leq L_w$ for every $x \in \mathcal{A}_w$ and $L_{w}(x) < \sigma_w ^{-1}$ for every $x \in \mathcal{A}_{w}^{c}=M\setminus \mathcal{A}_w$.
	\item[(II)] There exists a finite covering $\mathcal{U}_w$ of $M$, by open domains of injectivity for $f_w$, such that $\mathcal{A}_w$ can be covered by $q_w < \deg(f_w)$.
	\item [(III)] For every $\varepsilon>0$ we can find some positive integer $\tilde{n}=\tilde{n}(w,\varepsilon)$ satisfying $f_{\theta^j(w)}^{\tilde{n}}(B_{\theta^j(w)}(f^{j}_{w}(x),\varepsilon))=M$ for any $j\geq 0.$
	\item [(IV)] We assume the existence of some positive $\varepsilon_0 >0$ satisfying for all $w\in X$ the following
	\begin{equation}
	    \label{hipo4}
	    e^{\varepsilon_0} < \dfrac{deg f_w}{q_w}
	\end{equation}

	\item [(V)] 	Suppose that there exists $c>0$ such that for $\mathbb{P}$-almost every $w\in X$ we can find $\hat{L}_w$ close enough to $1$ and $\hat{\sigma}_w >1$ satisfying for every $j \geq 0$ that
	\begin{equation}
	  \label{condV}  
	L_{\theta^{j}(w)} \leq \hat{L}_w \quad, \quad \hat{\sigma}_w \leq \sigma_{\theta^{j}(w)} \quad \text{and} \quad \hat{L}_{w}^{\rho}\hat{\sigma}_{w}^{-(1-\rho)}< e^{-2c} < 1
	\end{equation}
	where $\rho$ is given by Lemma \ref{lemma1}.
	\end{enumerate}
 The conditions (I) and (II) mean that it is possible the existence of expanding and contracting behavior in $M$ but it is required for every point at least one preimage in the expanding region. The condition (III) means that the skew-product $F$ is \emph{topologically exact}.
\medskip

Moreover, we assume that:
\begin{enumerate}
    \item [(H1)]
    \begin{align*}
        \label{hipo1}
         \int \log^{+}  L_{w}(x) d\mu(w,x)< +\infty \ \  \text{and} \ \ \int \log^{+}  L_{w}(x)^{-1} d\mu(w,x)< +\infty
    \end{align*}
    for each $\mu \in \mathcal{M}^{1}_{\mathbb{P}}(F)$.
\item [(H2)]

 \label{hypo2}

 $\displaystyle\inf_{(w,x)\in X\times M} L_{w}(x) > 0$

\item [(H3)]
For $\varepsilon>0$, there exist $\delta>0$ such that for $x\in M$ and $z\in B_{\delta}(x)\subset M$, we have

\begin{equation}
 \frac{ L_{w}(x)}{ L_{w}(z)}\leq \exp (\varepsilon/2)  
\end{equation}
 $\mathbb{P}$-a.e. $w\in X$.
\end{enumerate}

Here we establish the definition of return point between fibers and the nonuniform specification property. 

\begin{definition}
 \label{pontoperio}
 A point $x\in M$ is a return point between fibers if there exist $w \in X$ and $n\in \mathbb{N}$ such that $f_{w}^{n}(x)=x$. In the case that $\theta^{n}(w)=w$, then $(w,x)$ is a periodic point of $F$.
\end{definition}

\medskip

On the other hand, a $(n, \varepsilon)$ {\it uniform dynamical ball} is defined as
\[
B_{n,w}(x, \varepsilon)= \bigcap_{k=0}^{n}f^{-k}_{w}\Bigl(B_{\theta^{k}(w)}\bigl(f^{k}_{w}(x), \varepsilon \bigr)\Bigr)\subset M.
\]
 
We say that $(F,\mu)$ has the {\bf  nonuniform specification property between fibers} if for $\mu-$ almost everywhere, the ball $B_{n,w}(x, \varepsilon)$ contains a return point whose ``period'' is less than $n+ K_{\varepsilon}(n,(w,x))$ satisfying

\begin{equation}
\label{specificationproper}
   \limsup_{n\rightarrow \infty} \frac{K_{\varepsilon}(n,(w,x))}{n}=0.  
\end{equation}

\medskip

Let $F: X\times M\rightarrow X\times M$ be a RDS and $\mu$  a probability. 
We say that a real, nonnegative, integrable function $J_\mu (F)$ is a Jacobian if there exists $\mathcal{I} \subset X\times M$ of measure
0 such that for every measurable set $A \subset X\times M\setminus \mathcal{I}$ on which $F$ is injective we have that 
$$
\mu(F(A))=\int_{A}  J_{\mu}(F)\, d\mu.
$$

 Restricting $J_\mu (F)$ to $M_w=\{x \ : \ (w,x)\in X\times M \}= M$ and neglecting the zero measure set $\mathcal{I}$, we may  consider the function $J_{\mu_w}(f_w) (x) = J_\mu (F)(w,x)$ on the fiber $M_w$, for $\mathbb{P}$-almost every $w \in X$. It is clear from the definition above that :
$$
\mu_{\theta(w)}(f_w(A_w))=\int_{A_w}  J_{\mu_w}(f_{w})\, d\mu_w,
$$
For any $A_w \subset M_w \setminus \mathcal{I}_w $   measurable such that and  $f_w|_{A_w}$ is injective. In particular, $J_{\mu_w}$ is the jacobian of $f_w$ relative to $\mu_w$.

In our setting, since $F$ is essentially countable  to one,  Proposition 1.9.5 of \cite{Urbanski3} give us that  Jacobians of any measure $\mu$ exists. 

\medskip
We recall that the {\it support} of an invariant measure $\mu$ is the full measure set $supp(\mu)$ of all points such that any neighbourhood of each one of them has positive measure.

\begin{lemma}
		\label{lemmajacobiano}
		 $supp(\mu_w)=M$ if $F$ is topologically exact.
	\end{lemma}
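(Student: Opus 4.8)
The plan is to push the disintegration forward along the base dynamics and then let topological exactness act in each fibre. First I would use that $d\mu(w,x)=d\mu_w(x)\,d\mathbb{P}(w)$ together with the $F$-invariance of $\mu$ and $\theta_\ast\mathbb{P}=\mathbb{P}$: pushing the disintegrated measure forward by $F$ and invoking the essential uniqueness of the Rokhlin disintegration, one obtains the equivariance $(f_w)_\ast\mu_w=\mu_{\theta(w)}$ for $\mathbb{P}$-almost every $w$. Since $\theta$ is invertible and $\mathbb{P}$-preserving, this identity may be assumed to hold for every $w$ in a $\theta$-invariant conull set $X_0$, so that $(f^n_w)_\ast\mu_w=\mu_{\theta^n(w)}$ for all $n\ge 0$ and all $w\in X_0$; it is for such $w$ (i.e.\ $\mathbb{P}$-a.e.) that I would establish $\operatorname{supp}\mu_w=M$.

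Next I would convert this measure equivariance into an equivariance of supports. Because each $f_w$ is continuous and $M$ is compact, $f_w(\operatorname{supp}\mu_w)$ is closed, and checking the two inclusions directly gives $\operatorname{supp}\mu_{\theta(w)}=\operatorname{supp}\big((f_w)_\ast\mu_w\big)=f_w\big(\operatorname{supp}\mu_w\big)$; by induction $\operatorname{supp}\mu_{\theta^n(w)}=f^n_w\big(\operatorname{supp}\mu_w\big)$ for every $n\ge 0$ and $w\in X_0$. In particular $S_w:=\operatorname{supp}\mu_w$ is nonempty and closed and satisfies $S_w\subset (f^n_w)^{-1}\!\big(f^n_w(S_w)\big)$ for all $n$.

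The heart of the argument is then a contradiction using hypothesis (III). Assume $S_w\ne M$ for some $w\in X_0$; then $M\setminus S_w$ contains a ball $B(x_0,\varepsilon)$, hence $\mu_w\big(B(x_0,\varepsilon)\big)=0$. Fix $z\in S_w$, so $\mu_w\big(B(z,\varepsilon)\big)>0$, and apply (III) with $j=0$ to get $\tilde n=\tilde n(w,\varepsilon)$ with $f^{\tilde n}_w\big(B(z,\varepsilon)\big)=M$. Consider the nonzero measure $\nu:=\mu_w|_{B(z,\varepsilon)}\le\mu_w$; then $(f^{\tilde n}_w)_\ast\nu\le\mu_{\theta^{\tilde n}(w)}$ and $\operatorname{supp}\big((f^{\tilde n}_w)_\ast\nu\big)\subset\operatorname{supp}\mu_{\theta^{\tilde n}(w)}=f^{\tilde n}_w(S_w)$. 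The step to establish is that $(f^{\tilde n}_w)_\ast\nu$ gives positive mass to every nonempty open subset of $M=f^{\tilde n}_w\big(B(z,\varepsilon)\big)$: decomposing $B(z,\varepsilon)$ into finitely many domains of injectivity of $f^{\tilde n}_w$ and using the distortion information available (hypotheses (H1)--(H3) together with the existence of the Jacobian $J_{\mu_w}(f_w)$ recorded above), the $\nu$-mass cannot vanish on open sets once transported onto $M$. This forces $\operatorname{supp}\mu_{\theta^{\tilde n}(w)}=M$; repeating the argument fibrewise along $X_0$ — or, alternatively, integrating in $w$ and using the ergodicity of $(\theta,\mathbb{P})$ to upgrade ``$\operatorname{supp}\mu_{w'}=M$ on a positive-measure set of $w'$'' to ``$\mathbb{P}$-a.e.\ $w'$'' — contradicts $\mu_w\big(B(x_0,\varepsilon)\big)=0$ in the fibre $w$. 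Hence $S_w=M$.

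The main obstacle is precisely the assertion isolated in the last step: topological exactness only guarantees that the full ball $B(z,\varepsilon)$, not the possibly much thinner set $B(z,\varepsilon)\cap S_w$, spreads onto $M$, so one cannot directly read off full support from $f^{\tilde n}_w(S_w)=f^{\tilde n}_w\big(B(z,\varepsilon)\big)$. Making this rigorous requires controlling how the Jacobian of the iterates $f^n_w$ distorts masses over a ball — this is where (H1)--(H3) and the uniformity in $j$ built into (III) are used — after which the ergodicity of the base serves only to pass from $\mathbb{P}$-almost every fibre to a $\theta$-invariant conull set of fibres.
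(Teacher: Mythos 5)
There is a genuine gap, and it sits exactly where you place it yourself: the assertion that $(f^{\tilde n}_w)_\ast\nu$ charges every nonempty open subset of $M$ is never proved, and it is essentially equivalent to the statement of the lemma. To get $(f^{\tilde n}_w)_\ast\nu(V)>0$ for an open $V$ you would need $\mu_w\bigl(B(z,\varepsilon)\cap (f^{\tilde n}_w)^{-1}(V)\bigr)>0$, i.e.\ that the open set $(f^{\tilde n}_w)^{-1}(V)$ meets $\operatorname{supp}\mu_w$ inside $B(z,\varepsilon)$ --- but that is precisely the kind of fact you are trying to establish, so the argument is circular. No amount of distortion control on the Jacobian repairs this, because the difficulty is not quantitative: exactness spreads the \emph{ball} over $M$, not the possibly thin set $B(z,\varepsilon)\cap\operatorname{supp}\mu_w$. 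The closing appeal to ergodicity of $(\theta,\mathbb{P})$ is also not usable here, since you have not yet shown $\operatorname{supp}\mu_{w'}=M$ on any positive-measure set of fibres.

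The paper's proof avoids all of this by applying exactness to the \emph{null} open set rather than to a positive-measure ball. Suppose $U_w\subset M$ is open, nonempty, with $\mu_w(U_w)=0$. Exactness gives $N_w$ with $f^{N_w}_w(U_w)=M$. Partition $U_w$ into finitely many sets $U_{w,1},\dots,U_{w,k}$ on which $f^{N_w}_w$ is injective; the Jacobian identity recorded in the preliminaries yields
\begin{equation*}
\mu_{\theta^{N_w}(w)}\bigl(f^{N_w}_w(U_{w,j})\bigr)=\int_{U_{w,j}}J_{\mu_w}f^{N_w}_w\,d\mu_w=0,
\end{equation*}
since each $U_{w,j}$ is $\mu_w$-null. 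Summing over $j$ gives $\mu_{\theta^{N_w}(w)}(M)=0$, contradicting the fact that the $\mu_{w'}$ are probability measures. In short: the image of a null set under finitely many injective branches is null, so it cannot cover $M$. Your first two paragraphs (equivariance $(f_w)_\ast\mu_w=\mu_{\theta(w)}$ and $\operatorname{supp}\mu_{\theta^n(w)}=f^n_w(\operatorname{supp}\mu_w)$) are correct but are not needed once the argument is run in this direction.
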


\begin{proof}
	Now suppose that $F$ is topologically exact.  Given $U_w$ some non-empty open set $M$ such that $\mu_{w}(U_w)=0$, by the exactness assumption, we can take  $N_w\in \mathbb{N}$ such that $f^{N_w}_{w}(U_w)=M$. Partitioning $U_w$ into mensurable subsets $U_{w,1},...,U_{w,k}$ such that every $f_{w}^{N_w}|_{U_{w,j}}$ is injective for $j=1,...,k$. we get that
		$$
		\mu_{\theta^{N_{w}}(w)}\bigl( f^{N_w}_{w}(U_{w,j})\bigr)=\int_{U_w,j}J_{\mu_w}f^{N_w}_{w} d\mu_{w}=0.
		$$
		Hence	$$	\mu_{\theta^{N_{w}}(w)}(M)\leq\sum^{k}_{j=1}\mu_{\theta^{N_{w}}(w)}\bigl( f^{N_w}_{w}(U_{w,j})\bigr)=0$$
		which is a contradiction. This completes the proof.
	
\end{proof}

\section{Hyperbolic times}

For the next step we need the notion of hyperbolic times, as in \cite{Alves1}. But first we need the following definition. 
\medskip
\begin{definition}
\label{nonuniformlyexp}
 We say that a measure $\mu \in \mathcal{M}_{\mathbb{P}}(X \times M)$ is expanding with exponent $c$ if for $\mu-$almost every $(w,x)\in X \times M$ we have:

  \begin{equation}
     \label{measurexpanding}
   \limsup_{n\rightarrow \infty} \frac{1}{n}\sum^{n-1}_{j=0}\log L_{\theta^{j}(w)}(f^{j}_{w}(x))\leqslant -2c<0
 \end{equation}

\end{definition}

\begin{definition}
 \label{timehyp}
Given $c>0$, we say $n\in \mathbb{N}$ is a $c$- hyperbolic time of $F$ for  $(w,x)\in X \times M$ if 
 \[
  \prod^{n-1}_{j=n-k}L_{\theta^{j}(w)}(f^{j}_{w}(x)) \leqslant\exp(-ck), \quad \text{for every} \ 1\leqslant k\leqslant n
 \]
 
\end{definition}

We say that $F$ has \textit{positive density of hyperbolic times} for $(w,x)$, if the set $H_{(w,x)}$ of integers which are hyperbolic times of $F$ for $(w,x)$ satisfies
\begin{equation}
\label{defdenpositive}
 \liminf_{n\rightarrow +\infty}\frac{1}{n}\# \big(H_{(w,x)}\cap [1,n]\big)>0
\end{equation}

\begin{definition}
\label{tempohiper}
We consider $H_{n}(c,\delta, F)$ the set of points $(w,x)$ such that $n$ is a hyperbolic time of $(w,x)$ and define 
\[
H(c,\delta, F):= \bigcap_{l\geqq 1} \bigcup_{n \geqq l}H_{n}(c,\delta, F)
\]
the set points with infinitely many $(c,\delta)$-hyperbolic times for $F$.
\end{definition}

\medskip

As shown in \cite{Oliveira1}, we  have infinity many hyperbolic times for expanding measures. For this we need of Pliss Lemma \cite{Alves1}

\begin{lemma}[Pliss]
\label{lemPliss}
Let $0<c_1<c_2<A$ and $\zeta=\dfrac{c_{2}-c_{1}}{A-c_{1}}$. Given real numbers $a_1,...,a_{N_0}$ satisfying $a_j\leq A$ for each $1\leq j\leq N_0$ and
\[
 \sum_{j=1}^{N_0}a_{j}\geq c_{2}N_0,
\]
there exist $l>\zeta N_0$ e $1<n_1<\cdot\cdot\cdot<n_l\leq N_0$ such that
\[
 \sum_{j=n+1}^{n_i}a_{j}\geq c_{1}(n_{i}-n)
\]
for each $0\leq n<n_{i}$ and $i=1,...,l$.
\end{lemma}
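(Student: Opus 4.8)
The plan is to run the standard partial-sums argument underlying Pliss's lemma. First I would normalise: set $b_j:=a_j-c_1$ for $1\le j\le N_0$, and define $S_0:=0$ and $S_k:=\sum_{j=1}^{k}b_j$ for $1\le k\le N_0$. The hypotheses become $b_j\le A-c_1$ for every $j$ and $S_{N_0}=\sum_{j=1}^{N_0}a_j-c_1N_0\ge(c_2-c_1)N_0>0$. Writing $a_j=b_j+c_1$ and summing shows that, for any $0\le n<m\le N_0$, the inequality $\sum_{j=n+1}^{m}a_j\ge c_1(m-n)$ is equivalent to $S_m\ge S_n$. Thus it is enough to produce at least $\zeta N_0$ indices $k\in\{1,\dots,N_0\}$ such that $S_k\ge S_n$ for all $0\le n<k$; I will call these the \emph{ladder times}, list them in increasing order as $n_1<\dots<n_l$, and set $n_0:=0$.

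To count the ladder times I would work with the running maximum $M_k:=\max_{0\le j\le k}S_j$. Two elementary observations do most of the work: (a) $k$ is a ladder time exactly when $S_k=M_k$; consequently, between two consecutive ladder times $M$ is constant, equal to $S_{n_{i-1}}$ throughout $\{n_{i-1},\dots,n_i-1\}$, and in particular $S_{n_i-1}\le S_{n_{i-1}}$; (b) the largest index where $S$ attains $\max_{0\le j\le N_0}S_j$ is a ladder time, and since $S_{N_0}>0=S_0$ this maximum is positive hence attained at some index $\ge1$, so that index is $n_l$ and $S_{n_l}=M_{N_0}\ge S_{N_0}\ge(c_2-c_1)N_0$. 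The crux — and the step that really needs care — is the estimate $S_{n_i}-S_{n_{i-1}}\le A-c_1$ for each $i=1,\dots,l$: I would split $S_{n_i}-S_{n_{i-1}}=(S_{n_i}-S_{n_i-1})+(S_{n_i-1}-S_{n_{i-1}})$, bound the first term by $b_{n_i}\le A-c_1$, and note that the second is $\le 0$ by (a). Telescoping over $i$ then yields
\[
(c_2-c_1)N_0\ \le\ S_{n_l}\ =\ \sum_{i=1}^{l}\bigl(S_{n_i}-S_{n_{i-1}}\bigr)\ \le\ l\,(A-c_1),
\]
so $l\ge\frac{c_2-c_1}{A-c_1}N_0=\zeta N_0$.

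It remains to sharpen this to $l>\zeta N_0$, which I would do by examining the equality case. If $l=\zeta N_0$ (so $\zeta N_0$ is an integer), every inequality in the displayed chain is an equality, forcing $b_{n_i}=A-c_1$ and $S_{n_i-1}=S_{n_{i-1}}$ for every $i$; by (a) the equality $S_{n_i-1}=S_{n_{i-1}}$ is possible only when $n_i=n_{i-1}+1$ (and $n_1=1$), so the ladder times are precisely $1,2,\dots,l$ and $n_l=l$. But equality also gives $S_{N_0}=M_{N_0}=(c_2-c_1)N_0$, so $N_0$ is itself a ladder time, whence $l=N_0$; then $\zeta=1$, contradicting $c_2<A$. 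Hence $l>\zeta N_0$, and $n_1<\dots<n_l$ are the required indices. (If only $l\ge\zeta N_0$ were needed, this last paragraph could be omitted; the whole argument's only genuine obstacle is locating the bound $S_{n_i}-S_{n_{i-1}}\le A-c_1$ and recognising the role of ``no ladder time strictly in between''.)
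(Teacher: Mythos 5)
Your proof is correct. The paper does not actually prove this lemma --- it is quoted from \cite{Alves1} --- but your argument is the standard one found there: pass to $b_j=a_j-c_1$, take the $n_i$ to be the record (ladder) times of the partial sums $S_k$, and count them by telescoping the bound $S_{n_i}-S_{n_{i-1}}\le A-c_1$ against $S_{n_l}\ge (c_2-c_1)N_0$; all the delicate steps (the running maximum being constant between consecutive ladder times, the last global maximiser being $n_l$) are handled correctly. Your final paragraph upgrading $l\ge\zeta N_0$ to the strict inequality $l>\zeta N_0$ claimed in the statement is a genuine and correct refinement that the usual textbook presentation glosses over; note also that the paper's ``$1<n_1$'' should read ``$1\le n_1$'', which is what your construction delivers.
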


This lemma of Pliss together with (H2), we can find a set of full measure $H$, such that all its points has infinitely many hyperbolic times, which can be seen in the following lemma.

\begin{lemma}
\label{leminfth}
For every invariant expanding measure $\mu$ with $c-$expanding, there exists a full $\mu-$measure set $H\subset X \times M$ that every $(w,x)\in H$ has infinitely many hyperbolic times $n_{i}=n_{i}(w,x)$ with exponent $c$ and, in fact, the density of hyperbolic times at infinity is larger than some  $\alpha=\alpha(c)>0$:
\begin{itemize}
 
 \item  $ \displaystyle \prod_{j=n_{i}-k}^{n_{i}-1} L_{\theta^{j}(w)}(f^{j}_{w}(x))\leq \exp(-ck)$ for every $1\leq k \leq n_{i}$
 
 \item  $\displaystyle \liminf_{n\rightarrow \infty} \dfrac{\#\{0\leq n_{i}\leq n\}}{n}\geq \alpha>0$
\end{itemize}
\end{lemma}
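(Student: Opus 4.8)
The plan is to run the Pliss Lemma (Lemma~\ref{lemPliss}) fibrewise on the sequence
\[
a_{j} \;:=\; -\log L_{\theta^{j}(w)}\bigl(f^{j}_{w}(x)\bigr), \qquad j\ge 0 .
\]
First I would recast the expanding condition \eqref{measurexpanding}: since the $\limsup$ of the Birkhoff averages of $\log L_{\theta^{j}(w)}(f^{j}_{w}(x))$ is $\le -2c$, we obtain $\liminf_{n\to\infty}\tfrac1n\sum_{j=0}^{n-1}a_{j}\ge 2c$ for $\mu$-almost every $(w,x)$, so there is $N_{1}=N_{1}(w,x)$ with $\sum_{j=0}^{n-1}a_{j}\ge\tfrac32 c\,n$ for all $n\ge N_{1}$. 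Simultaneously, hypothesis (H2) supplies a \emph{uniform} upper bound $a_{j}\le A:=-\log\bigl(\inf_{(w,x)\in X\times M}L_{w}(x)\bigr)<+\infty$, valid for every $j$ and every $(w,x)$; and the inequality just obtained forces $A\ge 2c>c$, so $A$ is an admissible ceiling strictly above the target exponent. (Hypothesis (H1) is what keeps $\log L$ integrable in general, but for this lemma the pointwise bounds above are all that is needed.)

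Then I would fix $c_{1}=c$, $c_{2}=\tfrac32 c$ and $\zeta=\dfrac{c_{2}-c_{1}}{A-c_{1}}=\dfrac{c/2}{A-c}>0$, take $H$ to be the full $\mu$-measure set on which \eqref{measurexpanding} holds (its measurability being routine from the measurability of $(w,x)\mapsto L_{w}(x)$), and for each $(w,x)\in H$ and each $n\ge N_{1}$ apply Lemma~\ref{lemPliss} to the block $a_{0},\dots,a_{n-1}$. This yields $l>\zeta n$ integers $n_{1}<\dots<n_{l}$ in $[1,n]$ such that $\sum_{j=m}^{n_{i}-1}a_{j}\ge c\,(n_{i}-m)$ for all $0\le m<n_{i}$; rewriting with $k=n_{i}-m$ this reads $\prod_{j=n_{i}-k}^{n_{i}-1}L_{\theta^{j}(w)}(f^{j}_{w}(x))\le\exp(-ck)$ for every $1\le k\le n_{i}$, i.e.\ each $n_{i}$ is a $c$-hyperbolic time for $(w,x)$ in the sense of Definition~\ref{timehyp}. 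This establishes the first bullet.

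For the density assertion, the key observation is that ``being a $c$-hyperbolic time of $(w,x)$'' is an intrinsic property of the integer $n_{i}$: the defining inequality of Definition~\ref{timehyp} makes no reference to the window length $n$ used in the Pliss step. Consequently $H_{(w,x)}\cap[1,n]$ contains all the $n_{i}$ produced at stage $n$, whence $\#\bigl(H_{(w,x)}\cap[1,n]\bigr)\ge\zeta n$ for every $n\ge N_{1}$; letting $n\to\infty$ gives the second bullet with $\alpha=\alpha(c)=\zeta$, and in particular $H_{(w,x)}$ is infinite.

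The only genuine obstacle is securing the Pliss hypothesis that the $a_{j}$ admit an upper bound independent of $j$ (and, here, of $(w,x)$): this is precisely the content of (H2), and without a positive lower bound on $L_{w}(x)$ the sequence $a_{j}$ could fail to be bounded above, so that no uniform $A$, and hence no uniform density $\alpha$, would be available. The remaining points — that $A>c$ (automatic from the expanding inequality), the matching of index conventions between the Pliss enumeration and Definition~\ref{timehyp}, and the measurability of $H$ — are routine bookkeeping.
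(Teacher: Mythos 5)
Your proof is correct and follows the same Pliss-lemma argument the paper relies on: the paper does not spell out a proof, but it explicitly frames the lemma as an application of Lemma~\ref{lemPliss} together with (H2) and defers the details to \cite[Lemmas 5.4 and 5.5]{Oliveira2}, where precisely this computation with $a_j=-\log L_{\theta^j(w)}(f^j_w(x))$ is carried out. Your choices $c_1=c$, $c_2=\tfrac{3}{2}c$, the uniform ceiling $A$ supplied by (H2), and the resulting density $\alpha=\zeta$ all match that standard argument.
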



As a consequence of the previous lemma and the distortion hypothesis (H3), we obtain that.

\begin{lemma}
\label{lemacontra}
There exist $\delta>0$ such that for $\mathbb{P}$-almost every point $w\in X$, if $n_{i}$ is a hyperbolic time of $(w,x)$ and let $z_{n_i}\in B_{\delta}(f^{n_i}_{w}(x))$, then there exists $z\in B_{\delta }(x)\subset M$ such that $f^{n_i}_{w}(z)=z_{n_i}$ and
$$
d(f^{n_{i}-k}_{w}(z), f^{n_{i}-k}_{w}(x))\leq \exp(-ck/2)d(f^{n_{i}}_{w}(z),f^{n_{i}}_{w}(x)), 
$$
for each $1\leq k\leq n_i$.
\end{lemma}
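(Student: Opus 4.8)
The plan is to construct the preimage $z$ by pulling $z_{n_i}$ backwards along the inverse branches of $f_{w},f_{\theta(w)},\dots,f_{\theta^{n_i-1}(w)}$ that shadow the finite orbit $x,f_{w}(x),\dots,f_{w}^{n_i}(x)$, extracting the contraction from the hyperbolic-time inequality of Lemma~\ref{leminfth} together with the distortion control (H3). First I would fix $\delta$: applying (H3) with $\varepsilon=c$ yields $\delta_{1}>0$ with $L_{w}(y)/L_{w}(y')\le\exp(c/2)$ whenever $d(y,y')<\delta_{1}$, for $\mathbb{P}$-almost every $w$. Using compactness of $M$, the bound $\deg(F)<\infty$ and the bounds on $L_{w}(\cdot)$ from (I), (V) and (H2), I would then choose $0<\delta\le\delta_{1}$ so small that for $\mathbb{P}$-almost every $w$, every $y\in M$ and every preimage $y^{\ast}\in f_{w}^{-1}(y)$ there is an inverse branch of $f_{w}$ defined on all of $B_{\delta}(y)$, sending $y$ to $y^{\ast}$, with image contained in $B_{\delta_{1}}(y^{\ast})$; by the choice of $\delta_{1}$ such a branch is then Lipschitz with constant at most $\exp(c/2)\,L_{w}(y^{\ast})$. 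From now on I restrict to the full $\mathbb{P}$-measure set of $w$ on which (H3) and (V) hold and for which $\mu_{w}$-a.e.\ $x$ gives $(w,x)\in H$, the set of Lemma~\ref{leminfth}.

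Fix such a $w$, a point $x$, and a hyperbolic time $n:=n_{i}$ of $(w,x)$, and set $z_{n}:=z_{n_i}\in B_{\delta}(f_{w}^{n}(x))$. I would define $z_{n},z_{n-1},\dots,z_{0}$ inductively: given $z_{n-k+1}\in B_{\delta}(f_{w}^{n-k+1}(x))$, let $z_{n-k}$ be the image of $z_{n-k+1}$ under the branch of $f_{\theta^{n-k}(w)}$ above that carries $f_{w}^{n-k+1}(x)$ to $f_{w}^{n-k}(x)$, a branch Lipschitz with constant at most $\exp(c/2)\,L_{\theta^{n-k}(w)}(f_{w}^{n-k}(x))$. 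Telescoping this estimate and using that $n$ is a hyperbolic time (first item of Lemma~\ref{leminfth}),
\begin{align*}
d\bigl(z_{n-k},f_{w}^{n-k}(x)\bigr)
&\le\Bigl(\prod_{j=n-k}^{n-1}\exp(c/2)\,L_{\theta^{j}(w)}(f_{w}^{j}(x))\Bigr)\,d\bigl(z_{n},f_{w}^{n}(x)\bigr)\\
&\le e^{ck/2}\,e^{-ck}\,d\bigl(z_{n},f_{w}^{n}(x)\bigr)=e^{-ck/2}\,d\bigl(z_{n},f_{w}^{n}(x)\bigr),
\end{align*}
for $1\le k\le n$; since the right-hand side is $\le e^{-ck/2}\delta\le\delta$, every $z_{n-k}$ stays in $B_{\delta}(f_{w}^{n-k}(x))$, so the induction runs all the way to $k=n$ and is legitimate. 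Then $z:=z_{0}$ satisfies $f_{w}^{j}(z)=z_{j}$ for $0\le j\le n$ — in particular $f_{w}^{n_i}(z)=z_{n_i}$ — and $z\in B_{\delta}(x)$; since $d(z_{n},f_{w}^{n}(x))=d(f_{w}^{n_i}(z),f_{w}^{n_i}(x))$, the displayed inequality is precisely the asserted one.

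The step I expect to be the real obstacle is the uniform choice of $\delta$ in the first paragraph: one must secure a single $\delta$ for which, for $\mathbb{P}$-almost every $w$ and \emph{all} $x$ and $j$, the inverse branches in play are defined on balls of a radius bounded below independently of $(w,x,j)$, and the pulled-back balls never escape the $\delta_{1}$-neighbourhoods where (H3) applies. This uniformity is exactly where compactness of $M$, finiteness of $\deg(F)$, the contraction bounds (I) and (V), and (H2) enter; once it is in place, the contraction estimate is just the routine telescoping computation displayed above.
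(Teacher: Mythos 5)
Your argument is correct and is essentially the standard one: the paper itself gives no proof of this lemma, deferring to \cite[Lemma 5.5]{Oliveira2}, and the proof there is exactly your backward induction along inverse branches, using (H3) to lose only a factor $e^{ck/2}$ against the $e^{-ck}$ contraction supplied by the hyperbolic time. The uniformity of $\delta$ that you flag as the delicate point is indeed where conditions (I), (II), (V), (H2) and compactness enter, and you handle it at least as explicitly as the cited source does.
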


The proofs of Lemmas \ref{leminfth} and \ref{lemacontra} above may be found in \cite[Lemmas 5.4 and 5.5]{Oliveira2}.

\section{Transfer Operator }
For $w\in X$, let  $f_w:M\rightarrow M$ be the fiber dynamics and $\phi_w:M\rightarrow \mathbb{R}$ be the potential. Consider $\mathcal{L}_{w}:C^0(M)\rightarrow C^0(M)$ the transfer operator	associated to $(f_w, \phi_w)$ defined by
$$\mathcal{L}_{w}(\psi)(x)=\sum_{y\in f_w^{-1}(x)}e^{\phi_w(y)}\psi(y).$$
Consider also its dual operator $\mathcal{L}_{w}^{\ast}:[C^0(M)]^\ast\to[C^0(M)]^\ast $ which satisfies
$$\int \psi\, d\mathcal{L}_{w}^{\ast}(\rho_{\theta(w)})=\int\mathcal{L}_{w}(\psi)\, d\rho_{\theta(w)}.$$

We say that a probability measure $\mu_w\in \mathcal{M}^1(M)$ is a \emph{reference measure} associated to $\lambda_w\in\mathbb{R}$ if $\mu_w$ satisfies 
$$\mathcal{L}_{w}^{\ast}(\mu_{\theta(w)})=\lambda_w\mu_w.$$

As in the deterministic case, by applying the Schauder-Tychonoff fixed point theorem, it is straightforward to prove the existence of a system of reference measures $\{\mu_w\}_{w\in X}$ where $\mu_w$ is associated to $\lambda_w$ given by
\begin{equation}
\label{lambda}
\lambda_{w}=\mathcal{L}^{\ast}_{w}\mu_{\theta(w)}(1)=\mu_{\theta(w)}(\mathcal{L}_{w}(1))
\end{equation}
for $\mathbb{P}$-almost every $w\in X.$ See \cite{Urbanski2} for details. In the sequel we derive some properties of the reference measure. For the case where the potential $\phi_w \equiv 0$ for all $w\in X$, then $\lambda_w = deg(f_w)$ for all $w\in X$.

\begin{lemma}
	\label{lemmajacobiano}
	The jacobian of $\mu_w$ with respect to $f_w$ is given by $J_{\mu_w}f_w = \lambda_{w}e^{-\phi_w}$. 
\end{lemma}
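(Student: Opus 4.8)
The plan is to verify directly that the function $\lambda_w e^{-\phi_w}$ satisfies the defining relation of a fibered Jacobian: for $\mathbb{P}$-almost every $w$ and every measurable $A_w\subset M_w$ on which $f_w$ is injective,
\[
\mu_{\theta(w)}\bigl(f_w(A_w)\bigr)=\int_{A_w}\lambda_w e^{-\phi_w}\,d\mu_w .
\]
Since the Jacobian is determined $\mu_w$-almost everywhere by this relation (if $J_1,J_2$ both satisfy it, then $\int_{A}J_1\,d\mu_w=\int_{A}J_2\,d\mu_w$ for \emph{every} measurable $A$, because $A$ decomposes into finitely many domains of injectivity of $f_w$, forcing $J_1=J_2$ $\mu_w$-a.e.), establishing the displayed identity proves $J_{\mu_w}f_w=\lambda_w e^{-\phi_w}$.

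First I would fix $w$ in the full $\mathbb{P}$-measure set on which $\mathcal{L}_w^{\ast}(\mu_{\theta(w)})=\lambda_w\mu_w$ holds, together with the associated system of reference measures $\{\mu_w\}_{w\in X}$ of \eqref{lambda}, and fix $A=A_w$ with $f_w|_A$ injective. The computational core is to feed $\psi=e^{-\phi_w}\mathbf{1}_A$ into the transfer operator: because $\phi_w$ is continuous on the compact manifold $M$ the exponentials cancel termwise, and injectivity of $f_w$ on $A$ collapses the sum,
\begin{align*}
\mathcal{L}_w(\psi)(x)=\sum_{y\in f_w^{-1}(x)}e^{\phi_w(y)}e^{-\phi_w(y)}\mathbf{1}_A(y)
=\#\bigl(f_w^{-1}(x)\cap A\bigr)=\mathbf{1}_{f_w(A)}(x).
\end{align*}
Integrating against $\mu_{\theta(w)}$ and using the duality $\mathcal{L}_w^{\ast}(\mu_{\theta(w)})=\lambda_w\mu_w$ then yields
\begin{align*}
\mu_{\theta(w)}\bigl(f_w(A)\bigr)=\int\mathcal{L}_w(\psi)\,d\mu_{\theta(w)}
=\int\psi\,d\bigl(\mathcal{L}_w^{\ast}\mu_{\theta(w)}\bigr)
=\lambda_w\int e^{-\phi_w}\mathbf{1}_A\,d\mu_w,
\end{align*}
which is exactly the identity above.

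The point demanding care — and the only real obstacle — is that the duality relation was stated for test functions in $C^0(M)$, whereas $\mathbf{1}_A$ is merely bounded Borel. I would remove this by a routine monotone-class argument: since $\mathcal{L}_w$ is a positive bounded linear operator, the identity $\lambda_w\int\psi\,d\mu_w=\int\mathcal{L}_w(\psi)\,d\mu_{\theta(w)}$ passes from $C^0(M)$ to bounded lower semicontinuous $\psi$ by monotone approximation from below, hence to indicators of open sets, and then to all bounded Borel $\psi$ by outer regularity of Borel measures on the compact metric space $M$; in particular it applies to $\psi=e^{-\phi_w}\mathbf{1}_A$ after absorbing the bounded continuous factor $e^{-\phi_w}$. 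One also records that $f_w(A)$ is Borel: intersecting $A$ with a finite cover of $M$ by domains of injectivity of $f_w$, each piece is mapped homeomorphically, so $f_w(A)$ is a finite union of Borel sets. Finally, measurability in $w$ of all quantities is inherited from the measurable dependences $w\mapsto f_w$, $w\mapsto\phi_w$, $w\mapsto\mu_w$ already built into the construction of the reference measures, so that $w\mapsto J_{\mu_w}f_w$ is a legitimate Jacobian for $F$.
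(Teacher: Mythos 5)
Your proof is correct and follows essentially the same route as the paper: feed $e^{-\phi_w}\mathbf{1}_A$ into the transfer operator, use the termwise cancellation of the exponentials and the injectivity of $f_w|_A$ to collapse the sum to $\mathbf{1}_{f_w(A)}$, and apply the duality $\mathcal{L}_w^{\ast}\mu_{\theta(w)}=\lambda_w\mu_w$. The only difference is in the handling of the non-continuous test function: the paper approximates $\mathcal{X}_A$ by a bounded sequence of continuous functions and passes to the limit, whereas you extend the duality to bounded Borel functions by a monotone-class/regularity argument --- your treatment of that step is, if anything, the more careful of the two.
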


\begin{proof}
	Let $A\subset M$ be a measurable set such that $f_{w}|_{A}$ is injective.  Take a bounded sequence $\{\zeta_n \}\in C^0(M)$ such that $\zeta_{n}\to \mathcal{X}_{A}$. Then
	\begin{eqnarray*}
		\int_{M} \lambda_{w}e^{-\phi_w}\zeta_{n} d\mu_{w}&=&\int_{M} e^{-\phi_w}\zeta_{n} d(\mathcal{L}^{\ast}_{w}\mu_{\theta(w)})=\int_{M} \mathcal{L}_{w}(e^{-\phi_w}\zeta_{n})(y) d\mu_{\theta(w)}(y)\\
		&=& \int_{M} \sum_{f_{w}(z)=y}\zeta_{n}(z)d\mu_{\theta(w)}(y)=\int_{M} \sum_{f_{w}(z)=y} \zeta_{n}(f^{-1}_{w}(y))d\mu_{\theta(w)}(y).
	\end{eqnarray*}
	Since $\displaystyle \int_{M} \sum_{f_{w}(z)=y} \zeta_{n}(f^{-1}_{w}(y))d\mu_{\theta(w)}(y)  \longrightarrow \int_{M} \mathcal{X}_{A}(f^{-1}_{w}(y))d\mu_{\theta(w)}(y)$ when $n\to\infty$
	and $\displaystyle \int_{M} \mathcal{X}_{A}(f^{-1}_{w}(y))d\mu_{\theta(w)}(y)=\int_{M}\mathcal{X}_{f_{w}(A)} d\mu_{\theta(w)}=\mu_{\theta(w)}(f_{w}(A)).$
	We conclude that
	$$	\mu_{\theta(w)}(f_{w}(A))=\int_{A} \lambda_{w}e^{-\phi_w} d\mu_{w}.$$
	Notice that, by induction, we have for every $n\in\mathbb{N}$ that	
	\begin{equation}
	\label{equaintlocal}
	\mu_{\theta^{n}(w)}(f^{n}_{w}(A))=\int_{A}\lambda^{n}_{w}e^{-S_{n}\phi_{w}} \ d\mu_{w},
	\end{equation}
	where $\lambda^{n}_{w}:= \lambda_{\theta^{n-1}(w)} \lambda_{\theta^{n-2}(w)}\cdot \cdot \cdot \lambda_{\theta(w)}\lambda_{w}$.

\end{proof}

\medskip
Consider $c>0$ given by condition~(\ref{measurexpanding}). Given $w\in X$, let $H_w\subset M$ be the subset of $M$ such that $(w, x)$ has infinitely many hyperbolic times, i.e.,

$$
H_w:=\left\{x\in M\  \big|\ \limsup_{n\to+\infty} \frac{1}{n}\sum^{n-1}_{j=0}\log L_{\theta^{j}(w)}(f^{j}_{w}(x))\leqslant -2c<0\right\}.
$$
Next we prove that $\mu_w(H_w)=1$ for $\mathbb{P}$-almost every $w\in M.$

\medskip

Let $\mathcal{P}$ be a partition of $M$  with cardinality $\# \mathcal{P} = k$. We suppose without loss of generality that the set $\mathcal{A}_w$ is contained in the first $q_w$ elements of $\mathcal{P}$ for all $w\in X$. Consider the numbers $$\bar{p}_w = k - q_w, \,\,\,\hat{q} = \sup_{w \in X} q_{w},\,\,\, \bar{q}= \inf_{w\in X} q_{w}\,\,\,  \mbox{and}\,\,\, \hat{p} = \sup_{w\in X} \bar{p}_{w}.$$
This numbers are well defined since we assume that $\deg(F)= \sup_w \deg(f_w) < \infty$.

For $\rho \in (0,1)$ and $n\in\mathbb{N}$ let $I(\rho, n)$ be the set of itinerates 
$$I(\rho, n)\!=\!\{(i_w,..., i_{\theta^{n-1}(w)})\in \{1,...,k \}^{n};\# \{0 \leq j \leq n-1 : i_{\theta^{j}(w)}\leq q_{\theta^{j}(w)} \} > \rho n \}$$
and consider
$$C_\rho := \limsup_{n}\dfrac{1}{n} \log \# I(\rho, n).$$

\begin{lemma}\cite[Lemma 3.1]{Varandas-Viana}
	\label{lemma1}
	Given $\varepsilon > 0$ there exists $\rho_0 \in (0,1)$ such that $C_\rho < \log \hat{q} + \varepsilon$ for every $\rho \in (\rho_0 , 1)$. 
\end{lemma}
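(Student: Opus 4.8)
The plan is to estimate $\#I(\rho,n)$ by splitting an itinerary according to which coordinates land in the "expanding-covered" part of the partition. Fix $n$ and a subset $S\subset\{0,1,\dots,n-1\}$ with $|S|=m$; these will be the indices $j$ for which $i_{\theta^j(w)}\le q_{\theta^j(w)}$. For such a fixed pattern $S$, the number of itineraries is at most $\prod_{j\in S}q_{\theta^j(w)}\cdot\prod_{j\notin S}\bar p_{\theta^j(w)}\le \hat q^{\,m}\hat p^{\,n-m}$, using the uniform bounds $\hat q=\sup_w q_w$ and $\hat p=\sup_w\bar p_w$ coming from $\deg(F)<\infty$. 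Since in $I(\rho,n)$ we have $m>\rho n$, and since $\hat q\ge 1$ while we may assume $\hat p<\hat q$ is false in general — so more carefully one writes $\hat q^{m}\hat p^{n-m}=\hat q^{n}(\hat p/\hat q)^{n-m}$ when $\hat p\le\hat q$, or simply bounds $\hat p^{n-m}\le \max(1,\hat p)^{n}$ — the dominant contribution is governed by the $\binom{n}{m}$ choices of $S$ together with the $\hat q^{m}$ factor. Summing over $m=\lceil\rho n\rceil,\dots,n$ gives
\[
\#I(\rho,n)\ \le\ \sum_{m>\rho n}\binom{n}{m}\hat q^{\,m}\,\hat p^{\,n-m}.
\]

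Next I would take $\tfrac1n\log$ of both sides and let $n\to\infty$. By Stirling's formula, $\tfrac1n\log\binom{n}{m}\to H(\beta)$ where $\beta=\lim m/n\in[\rho,1]$ and $H$ is the binary entropy function, and $\tfrac1n\log(\hat q^{m}\hat p^{n-m})\to\beta\log\hat q+(1-\beta)\log\hat p$. Hence
\[
C_\rho\ \le\ \sup_{\beta\in[\rho,1]}\Big(H(\beta)+\beta\log\hat q+(1-\beta)\log\hat p\Big).
\]
The function $g(\beta)=H(\beta)+\beta\log\hat q+(1-\beta)\log\hat p$ is continuous on $[0,1]$, and at $\beta=1$ we have $H(1)=0$, so $g(1)=\log\hat q$. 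Therefore, given $\varepsilon>0$, by continuity of $g$ there is $\rho_0\in(0,1)$ such that $g(\beta)<\log\hat q+\varepsilon$ for all $\beta\in(\rho_0,1]$; taking the supremum over $\beta\in[\rho,1]$ for any $\rho\in(\rho_0,1)$ yields $C_\rho<\log\hat q+\varepsilon$, which is the claim.

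The only delicate point is making the passage from the finite sum to the $\limsup$ rigorous: one should bound the sum by $(n+1)$ times its largest term and note that the prefactor $n+1$ disappears after $\tfrac1n\log$, and one must handle the maximization over the (varying) ratio $m/n$ uniformly — this is where I would invoke continuity of $g$ on the compact interval rather than optimizing by hand. I expect the main obstacle to be purely bookkeeping: keeping the $w$-dependence of $q_{\theta^j(w)}$ under control, which is why the uniform suprema $\hat q,\hat p$ (finite by $\deg(F)<\infty$) are introduced at the outset so that the estimate becomes independent of $w$. Everything else is the standard "entropy of Bernoulli-type counting" computation, and the reference to \cite[Lemma 3.1]{Varandas-Viana} indicates the deterministic analogue carries over verbatim once these uniform bounds are in place.
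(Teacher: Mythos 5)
Your proposal is correct and follows essentially the same route as the paper: bound $\#I(\rho,n)$ by $\sum_{m>\rho n}\binom{n}{m}\hat q^{\,m}\hat p^{\,n-m}$ using the uniform suprema $\hat q,\hat p$, control the binomial sum via Stirling, and let $\rho\to 1$ so that the extra exponential factors contribute less than $\varepsilon$. The only cosmetic difference is that you phrase the Stirling step through the binary entropy function $H(\beta)$ and a continuity argument at $\beta=1$, while the paper uses the cruder bound $\sum_{k=[\rho n]}^{n}\binom{n}{k}\le C_1\exp(2t(1-\rho)n)$ for $\rho>\tfrac12$; both yield the same conclusion.
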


\begin{proof}Notice that
	$  \displaystyle \# I(\rho, n) \leq \sum_{k = [\rho n]}^n \binom{n}{k} q_w q_{\theta(w)}\cdot \cdot \cdot q_{\theta^{k-1}(w)}p_w p_{\theta(w)}\cdot \cdot \cdot p_{\theta^{n -(k-1)}(w)}.
	$
	By applying Stirling's formula we have
	\[
	\sum_{k=[\rho n]}^{n} \binom{n}{k} = \dfrac{n}{2}\binom{n}{[\rho n]} \leq C_1 \exp{(2t(1 - \rho)n)}, \quad \mbox{for} \quad \rho > \dfrac{1}{2}.
	\]
	Thus there exist $C_1$ and $t > 0$ such that $
	\# I(\rho, n) \leq C_1 \exp{(2t(1 - \rho)n)} \ \hat{q}^n \ \hat{p}^{(1-\rho)n}.
	$
	Taking the limit when $n$ goes to infinity we have $$ C_\rho = \limsup_n \frac{1}{n}\log \# I(\rho, n) \leq \log \hat{q} + \varepsilon
	$$
	for any $\rho$ close enough to 1.
\end{proof}

From this lemma we can fix of $\rho < 1$ such that $$C_\rho < \log \hat{q} + \dfrac{\varepsilon_0}{4}.$$ 
Recalling the definition of $\lambda_w$ in ($\ref{lambda}$) and the equation (\ref{hipo4}) we have that
\begin{align*}
\lambda_w \geq \deg f_w \geq e^{ \log q_w + \varepsilon_0}.
\end{align*}
Now, using Lemma~\ref{lemmajacobiano}  and considering $\phi_w \equiv 0$ for all $w\in X$, we obtain that
\begin{equation}
\label{equadejacobiano1}
J_{\mu_w}f_w = \lambda_w \geq e^{\log q_w + \varepsilon_0} > q_w .
\end{equation}

\begin{proposition} \label{expanding} We have $\mu_w(H_w)=1$ for a.e. $w\in X$.
\end{proposition}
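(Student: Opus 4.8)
The plan is to show that the set $M \setminus H_w$ of points with no (or finitely many) hyperbolic times must have $\mu_w$-measure zero, by showing that such points can be trapped inside itineraries that stay too long in the contracting region $\mathcal{A}_w$, and that the $\mu_w$-measure of such ``bad'' itinerary cylinders decays exponentially at a rate governed by the Jacobian estimate $J_{\mu_w}f_w = \lambda_w > q_w$ (equation~(\ref{equadejacobiano1})), which beats the combinatorial growth rate $C_\rho < \log\hat q + \varepsilon_0/4$ of the bad itineraries from Lemma~\ref{lemma1}. The underlying philosophy is the standard one for nonuniformly expanding systems: a point fails to have hyperbolic times along a density-one set of times only if, infinitely often, its orbit spends a proportion larger than $\rho$ of the time in $\mathcal{A}_w$; by Pliss (Lemma~\ref{lemPliss}) combined with hypothesis~(\ref{measurexpanding}), the complement of $H_w$ is precisely captured by this over-visiting of $\mathcal{A}_w$.

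\medskip

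First I would fix the partition $\mathcal{P}$ with $\#\mathcal{P}=k$ as in the excerpt, with $\mathcal{A}_w$ contained in its first $q_w$ atoms, and fix $\rho<1$ with $C_\rho < \log\hat q + \varepsilon_0/4$. For $n\in\mathbb{N}$ define $B_n^w \subset M$ to be the union of all cylinders $[a_0,\dots,a_{n-1}]$ (with $a_j$ an atom of $\mathcal P$ for the map $f_{\theta^j(w)}$) whose itinerary lies in $I(\rho,n)$, i.e.\ visits the first-$q$ atoms more than $\rho n$ times. Using the Jacobian identity~(\ref{equaintlocal}) with $\phi_w\equiv 0$, so that $J_{\mu_w}f^n_w = \lambda^n_w$, together with the bounded-distortion-free fact that each atom on which $f^n_w$ is injective maps onto a set of $\mu$-measure at most $1$, I get the key bound
\[
\mu_w(B_n^w) \;\le\; \#I(\rho,n)\cdot \max_{a\in I(\rho,n)} \big(\lambda^n_w\big)^{-1}\!\big|_{\text{cylinder }a}.
\]
The point is that on a cylinder in $I(\rho,n)$ more than $\rho n$ of the factors $\lambda_{\theta^j(w)}$ exceed $q_{\theta^j(w)}\,e^{\varepsilon_0}$ while the rest are at least $1$ (the potential is zero so $\lambda_w \ge \deg f_w \ge 2$), hence $(\lambda^n_w)^{-1} \le \exp(-\varepsilon_0 \rho n)\cdot \hat q^{-\rho n}$ along such cylinders — and combining with $\#I(\rho,n)\le e^{(\log\hat q + \varepsilon_0/4)n}$ for large $n$ gives $\mu_w(B_n^w) \le e^{-\varepsilon_0 n /2}$, say, for all large $n$. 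In particular $\sum_n \mu_w(B_n^w) < \infty$, so by Borel--Cantelli $\mu_w(\limsup_n B_n^w)=0$ for $\mathbb{P}$-a.e.\ $w$.

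\medskip

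Finally I would close the argument by showing that $M\setminus H_w \subset \limsup_n B_n^w$ up to a $\mu_w$-null set, which is exactly where Pliss and the expanding hypothesis enter. If $x\in H_w^c$, then $\frac1n\sum_{j=0}^{n-1}\log L_{\theta^j(w)}(f^j_w(x)) \le -2c$ fails to hold in the $\limsup$; but by~(\ref{measurexpanding}) applied $\mu$-a.e.\ and Fubini, for $\mu_w$-a.e.\ $x$ the $\limsup$ IS $\le -2c$, so either $x$ is in a genuine null set, or — using (I), which gives $\log L_{\theta^j(w)}(f^j_w(x)) \le \log\hat L_w$ when $f^j_w(x)\in\mathcal A_{\theta^j(w)}$ and $\le -\log\hat\sigma_w < 0$ otherwise — the only way the Birkhoff average can stay negative while hyperbolic times are scarce is for the frequency of visits to $\mathcal A$ to exceed $\rho$ infinitely often, i.e.\ $x\in B_n^w$ for infinitely many $n$. (Here I use condition~(V): $\hat L_w^\rho \hat\sigma_w^{-(1-\rho)} < e^{-2c}$, which is precisely what guarantees that an itinerary visiting $\mathcal A$ with asymptotic frequency $\le\rho$ already forces the product estimate in Definition~\ref{timehyp}, so that such a point has positive density of hyperbolic times and lies in $H_w$.) I expect the main obstacle to be the bookkeeping in this last step: correctly matching the partition atoms with the domains of injectivity so that the Jacobian estimate~(\ref{equadejacobiano1}) applies cylinder-by-cylinder, and verifying that the measure-one hypothesis~(\ref{measurexpanding}), which is stated for $\mu$ on $X\times M$, disintegrates to give the $\limsup$ bound for $\mu_w$-a.e.\ $x$ for $\mathbb P$-a.e.\ $w$ — a routine but delicate Fubini/Rokhlin argument.
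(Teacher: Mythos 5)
Your main engine is exactly the paper's: cover the set $B_w(n)$ of points whose orbit visits $\{\mathcal{A}_{\theta^j(w)}\}$ with frequency at least $\rho$ by atoms of $\mathcal{P}^{(n)}=\bigvee_{j=0}^{n-1}(f_w^j)^{-1}\mathcal{P}$, bound the reference measure of each atom by $(\lambda_w^n)^{-1}$ via the Jacobian identity, multiply by $\#I(\rho,n)$ using Lemma~\ref{lemma1}, and apply Borel--Cantelli. (Two small quantitative slips: in your bound for $(\lambda_w^n)^{-1}$ you should use $\bar q=\inf_w q_w$ rather than $\hat q=\sup_w q_w$, since what you need is a lower bound on each factor $\lambda_{\theta^j(w)}\ge q_{\theta^j(w)}e^{\varepsilon_0}\ge\bar q\,e^{\varepsilon_0}$; and since this lower bound holds for every $j$ regardless of the itinerary --- the Jacobian is constant on the fiber --- you may as well use all $n$ factors, which is what the paper does and what makes the exponent $\log(\hat q/\bar q)-\varepsilon_0/2<0$ come out cleanly.)

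The genuine problem is your closing step. You invoke the expanding-measure condition~(\ref{measurexpanding}) for $\mu$ together with a Fubini/Rokhlin argument to assert that the Birkhoff limsup is $\le-2c$ for $\mu_w$-a.e.\ $x$. That move is both circular and inapplicable: if it were available it would already be the statement $\mu_w(H_w)=1$, making the entire Borel--Cantelli estimate redundant; and it is not available because the $\mu_w$ of Proposition~\ref{expanding} is the \emph{reference} (conformal) measure produced by the transfer operator, satisfying $\mathcal{L}_w^{\ast}\mu_{\theta(w)}=\lambda_w\mu_w$, not the disintegration of an invariant expanding measure, so~(\ref{measurexpanding}) says nothing about it a priori. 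The correct closing --- which you actually have, buried in your parenthetical about condition (V) --- is purely deterministic and needs no measure theory: if $x\notin\limsup_n B_w(n)$, then for all large $n$ the frequency of visits to $\mathcal{A}$ is below $\rho$, so condition (I) gives $\frac1n\sum_{j=0}^{n-1}\log L_{\theta^j(w)}(f_w^j(x))\le\rho\log\hat L_w-(1-\rho)\log\hat\sigma_w$, and condition (V) makes the right-hand side $\le-2c$; hence $M\setminus H_w\subset\limsup_n B_w(n)$ exactly, with no exceptional null set and no appeal to~(\ref{measurexpanding}) or to Pliss. Strip out the Fubini detour and state this implication as the last line, and your proof coincides with the paper's.
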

\begin{proof}
	Given $n\in\mathbb{N}$ denote by $B_w(n)$ the set of points $x\in M$ whose frequency of visits to $\{\mathcal{A}_{\theta^{j}(w)}\}_{0\leq j \leq n-1}$ up to time $n$ is at last $\rho$, that means, 
	\[
	B_w(n) =\left\{x\in M \ | \ \dfrac{1}{n} \# \{0 \leq j \leq n-1 : \ f^{j}_{w}(x) \in A_{\theta^j (w)}  \} \geq \rho  \right\}.
	\]
	Let $\mathcal{P}^{(n)}$ be the partition $\bigvee_{j=0}^{n-1}(f_w^j)^{-1}\mathcal{P}$. We cover $B_w(n)$ by elements of $\mathcal{P}^{(n)}$ and since $f^n_w$ is injective on every $P\in \mathcal{P}^{(n)}$, we may use (\ref{equadejacobiano1}) to obtain
	\begin{align*}
	1 \geq \mu_{\theta^n(w)}(f^n_w (P)) &= \int_P J_{\mu_w}(f^n_w) \ d\mu_w = \int_P \prod_{j=0}^{n-1}J_{\mu_{\theta^{j}(w)}}f_{\theta^j (w)} \ d\mu_w\\
	& \geq \prod_{j=0}^{n-1} e^{(\log q_{\theta^j(w)} + \varepsilon_0)}\mu_w (P) \geq e^{(\log \bar{q} + \varepsilon_0)n}\mu_w (P).
	\end{align*}
	Thus	
	$$
	\mu_w (P) \leq e^{-(\log \bar{q} + \varepsilon_0)n}.
	$$
	Since we can assume that $\hat{q}< \bar{q}e^{\varepsilon_0 /2}$ for every $w\in X$ we have  
	\begin{align*}
	\mu_w (B_w(n)) & \leq \# I(\rho,n) e^{-(\log \bar{q} + \varepsilon_0)n} \\
	& \leq e^{(\log \hat{q} + \varepsilon_0/4)n}e^{-(\log \bar{q} + \varepsilon_0)n}
	\leq e^{(\log \hat{q}/\bar{q} - \varepsilon_0/2)n}.
	\end{align*}
	Hence the measure $\mu_w( B_w(n))$ decreases exponentially fast when $n$ goes to infinity. 
	Applying the Borel-Cantelli lemma we conclude that $\mu_w$-almost every point belongs to $B_w(n)$ for at most finitely many values of $n$. Then, in view of our choice (\ref{condV}) we obtain for $n$ large enough that 	
	\[
	\sum_{j=0}^{n-1}\log L_{\theta^j(w)}(f^{j}_{w}(x)) \leq \rho \log \tilde{L}_w + (1-\rho)\log \tilde{\sigma}^{-1}_w \leq -2c < 0
	\]
	which proves that $\mu_w$-almost every point has infinitely many hyperbolic times.
\end{proof}
Notice that from the last proposition and recalling that $\mu_w$ is an open measure we conclude that $H_w$ is dense in $M$.	

\medskip
Define the {\it first hyperbolic time} function $n_{1}: H(c, \delta, F)\rightarrow \mathbb{N}$ setting $n_{1}(w, x)$ as the first hyperbolic time of $(w,x)$. Next we show that $n_{1}$ is integrable.

\begin{corollary}
 \label{n1integrable}
 $$
 \int n_1 d\mu < \infty
 $$
\end{corollary}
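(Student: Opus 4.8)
The plan is to establish an exponentially small upper bound for the fibrewise tail of $n_1$ and then to sum it. The two ingredients will be Pliss' Lemma (Lemma~\ref{lemPliss}) and the exponential estimate for the sets $B_w(n)$ that already appears inside the proof of Proposition~\ref{expanding}; recall that there $B_w(n)=\{x\in M:\ \tfrac1n\#\{0\le j\le n-1:\ f^j_w(x)\in\mathcal{A}_{\theta^j(w)}\}\ge\rho\}$, with $\rho$ as in~(\ref{condV}).

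First I would prove the inclusion
\[
\{(w,x):\ n_1(w,x)>n\}\ \subset\ \{(w,x):\ x\in B_w(n)\}\qquad\text{for every }n\ge 1 .
\]
Fix $\mathbb{P}$-a.e.\ $w$, so that the constants $\hat L_w,\hat\sigma_w$ of~(\ref{condV}) are available, and take $x\notin B_w(n)$, i.e.\ the orbit segment $x,f_w(x),\dots,f_w^{n-1}(x)$ visits the regions $\mathcal{A}_{\theta^j(w)}$ with frequency strictly less than $\rho$. Splitting $\sum_{j=0}^{n-1}\log L_{\theta^j(w)}(f^j_w(x))$ according to the visits to $\mathcal{A}_{\theta^j(w)}$ and to its complement and using~(I) together with~(\ref{condV}), exactly as in Proposition~\ref{expanding}, one gets
\[
\sum_{j=0}^{n-1}\log L_{\theta^j(w)}\bigl(f^j_w(x)\bigr)\ <\ n\log\bigl(\hat L_w^{\rho}\hat\sigma_w^{-(1-\rho)}\bigr)\ <\ -2cn .
\]
Then I would apply Lemma~\ref{lemPliss} to the numbers $a_j=-\log L_{\theta^j(w)}(f^j_w(x))$, which are bounded above by $A:=-\log\bigl(\inf_{(w,x)}L_w(x)\bigr)<\infty$ thanks to~(H2); moreover $A>2c$ automatically whenever the fibre set under consideration is nonempty, since the displayed estimate forces $\bigl(\inf_{(w,x)}L_w(x)\bigr)^n<e^{-2cn}$. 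Choosing $c_1=c<c_2=2c<A$, Pliss' Lemma produces an index in $[1,n]$ which is a $c$-hyperbolic time of $(w,x)$ in the sense of Definition~\ref{timehyp}. Hence $n_1(w,x)\le n$, and the inclusion follows by contraposition.

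Next I would quote Proposition~\ref{expanding}: the estimate obtained in its proof (which uses $\hat q<\bar q\,e^{\varepsilon_0/2}$ and~(\ref{equadejacobiano1})) gives a constant $\gamma:=\varepsilon_0/2-\log(\hat q/\bar q)>0$ with $\mu_w\bigl(B_w(n)\bigr)\le e^{-\gamma n}$ for $\mathbb{P}$-a.e.\ $w$ and every $n$. Integrating over $X$ and using the inclusion above,
\[
\mu\bigl(\{n_1>n\}\bigr)\ \le\ \int_X\mu_w\bigl(B_w(n)\bigr)\,d\mathbb{P}(w)\ \le\ e^{-\gamma n}.
\]
Since $n_1$ is $\mathbb{N}$-valued, I then conclude
\[
\int n_1\,d\mu\ =\ \sum_{n\ge 0}\mu\bigl(\{n_1>n\}\bigr)\ \le\ \sum_{n\ge 0}e^{-\gamma n}\ =\ \frac{1}{1-e^{-\gamma}}\ <\ \infty .
\]

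The main obstacle is the first step, namely turning ``$(w,x)$ has no hyperbolic time in $[1,n]$'' into ``$x\in B_w(n)$''. This requires the uniform lower bound on $L_w(x)$ from~(H2) to supply an admissible ceiling for Pliss' Lemma, and a careful matching of the index range in Definition~\ref{timehyp} of hyperbolic time with the blocks $\sum_{j=n'+1}^{n_i}a_j\ge c_1(n_i-n')$ produced by Lemma~\ref{lemPliss}. Once this inclusion is secured, the exponential tail is inherited from Proposition~\ref{expanding} and the final summation is routine.
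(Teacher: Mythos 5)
Your proof is correct and follows essentially the same route as the paper: bound the tail $\mu_w(\{n_1>n\})$ by $\mu_w(B_w(n))$, invoke the exponential decay of $\mu_w(B_w(n))$ established in Proposition~\ref{expanding}, and sum the resulting geometric series via the layer-cake formula. The only difference is that you make explicit, via Pliss' Lemma and hypothesis (H2), the inclusion $\{n_1>n\}\subset B_w(n)$ that the paper uses without comment.
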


\begin{proof}
Using that for every $w\in X$ the measure of $B_w (n)$ decreases exponentially fast when $n$ goes to infinity. We get,

\begin{align*}
\int_{X\times M} n_1 d\mu = \int_{X}\int_{M} n_1(w,x)d\mu_{w}d\mathbb{P}& = \int_X \sum_{n=0}^{\infty} \mu_{w}(\{x: n_{1}(w,x)>n  \})d\mathbb{P}\\
& \le \int_{X} \left(1 + \sum_{n=1}^{\infty} \mu_{w}(B_{w}(n)) \right)d\mathbb{P} \\
& \leq \int_{X} \left(1 + \sum_{n=1}^{\infty}  e^{(\log \hat{q}/\bar{q} - \varepsilon_0/2)n} \right)d\mathbb{P}\\
& = 1 + \sum_{n=1}^{\infty}  e^{(\log \hat{q}/\bar{q} - \varepsilon_0/2)n} < \infty
\end{align*}
\end{proof}

\medskip
Let $\gamma :\mathbb{R}^{+}\to \mathbb{R}^{+}$ a bijection. We say that a increasing sequence $(a_k)_{k\in \mathbb{N}}$ is $\gamma$-nonlacunary, if
\[
\lim_{k\rightarrow \infty}\frac{a_{k+1}-a_k}{\gamma(a_k)}=0
\]
\begin{remark}
 \label{diferenciatemhiper}
 Before prove the Lemma follow, we remark that growth rate of $n_{i+1}-n_{i}$ is obtained by analysing the decay of measure of
 \[
  Q_{n}=H_{n}\setminus \bigl(\bigcup_{j<n}H_{j}\bigr)= \{ (w,x)\in X \times M: n_{1}(w,x)=n\}
 \]

\end{remark}

\begin{lemma}
\label{lmlacunary}
If $\gamma^{-1}$ denotes the inverse function of $\gamma$, assume that for every $r>0$ the function $\gamma^{-1}\circ (rn_1)$ is $\mu$-integrable. Then, for $\mu$-a.e. $(w,x)\in X\times M$ the sequence $n_{j}( \ )$ of its hyperbolic times is $\gamma$-nonlacunary.
\end{lemma}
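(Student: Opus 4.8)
The plan is to reduce $\gamma$-nonlacunarity of the hyperbolic-time sequence $(n_j(w,x))_j$ to a one-sided estimate for the first-hyperbolic-time function $n_1$ along the orbit of $(w,x)$, and then to extract that estimate from Birkhoff's ergodic theorem applied to $\gamma^{-1}\circ(rn_1)$.

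The first and main step I would carry out is the \emph{composition property} of hyperbolic times: if $n$ is a $c$-hyperbolic time of $F$ for $(w,x)$ and $m$ is a $c$-hyperbolic time of $F$ for $F^{n}(w,x)=(\theta^{n}(w),f^{n}_{w}(x))$, then $n+m$ is a $c$-hyperbolic time of $F$ for $(w,x)$. For each $1\le k\le n+m$ one splits the product in Definition~\ref{timehyp}, namely $\prod_{j=n+m-k}^{n+m-1}L_{\theta^{j}(w)}(f^{j}_{w}(x))$, at the index $j=n$: after the reindexing $\theta^{j}(w)=\theta^{\,j-n}(\theta^{n}(w))$, $f^{j}_{w}=f^{\,j-n}_{\theta^{n}(w)}\circ f^{n}_{w}$, the block with indices $j\ge n$ is a product of at most $m$ consecutive factors along the orbit of $F^{n}(w,x)$, hence $\le e^{-c\min\{k,m\}}$; the remaining block is non-empty only when $k>m$, in which case it has $k-m$ factors and is $\le e^{-c(k-m)}$ by hyperbolicity of $n$ for $(w,x)$. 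Multiplying the two bounds gives $\le e^{-ck}$, as required. Choosing $m=n_{1}\bigl(F^{\,n_{j}(w,x)}(w,x)\bigr)$ and $n=n_{j}(w,x)$, and noting $m\ge 1$, this yields $n_{j+1}(w,x)\le n_{j}(w,x)+n_{1}\bigl(F^{\,n_{j}(w,x)}(w,x)\bigr)$, i.e.
\[
n_{j+1}(w,x)-n_{j}(w,x)\;\le\; n_{1}\bigl(F^{\,n_{j}(w,x)}(w,x)\bigr)\qquad\text{for all }j .
\]
Since the set $H$ of Lemma~\ref{leminfth} has full $\mu$-measure and $\mu$ is $F$-invariant, for $\mu$-a.e.\ $(w,x)$ one has $F^{m}(w,x)\in H$ for every $m\ge 0$, so the right-hand side above is finite $\mu$-a.e.

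Next I would feed in the integrability hypothesis. For $r>0$ put $g_{r}:=\gamma^{-1}\circ(rn_{1})\in L^{1}(\mu)$. Birkhoff's ergodic theorem (only $F$-invariance of $\mu$ is used) gives $\frac1m\sum_{j=0}^{m-1}g_{r}(F^{j}(w,x))\to\bar g_{r}(w,x)<\infty$ $\mu$-a.e., and since $g_{r}(F^{m}(w,x))$ is the difference of two consecutive Birkhoff sums this forces $\frac1m\,g_{r}(F^{m}(w,x))\to 0$ $\mu$-a.e. Hence for $\mu$-a.e.\ $(w,x)$ there is $M_{0}=M_{0}(r,w,x)$ with $\gamma^{-1}\bigl(r\,n_{1}(F^{m}(w,x))\bigr)\le m$ for all $m\ge M_{0}$; applying $\gamma$ --- which, in its role as a growth rate, we take to be increasing, so that $\gamma^{-1}$ is increasing too --- this reads $n_{1}(F^{m}(w,x))\le \gamma(m)/r$ for all $m\ge M_{0}$. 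Since $n_{j}(w,x)\to\infty$, for $j$ large we substitute $m=n_{j}(w,x)$ and combine with the displayed inequality to get $\dfrac{n_{j+1}(w,x)-n_{j}(w,x)}{\gamma\bigl(n_{j}(w,x)\bigr)}\le \dfrac1r$. Intersecting over $r\in\mathbb{N}$ the corresponding full-measure sets, we conclude that $\limsup_{j\to\infty}\dfrac{n_{j+1}(w,x)-n_{j}(w,x)}{\gamma\bigl(n_{j}(w,x)\bigr)}=0$ for $\mu$-a.e.\ $(w,x)$, which is precisely $\gamma$-nonlacunarity of $(n_{j}(w,x))_{j}$.

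I expect the composition property of hyperbolic times to be the real obstacle: it is where the random cocycle structure genuinely enters, and where the splitting of the product and the two-sided reindexing under $\theta^{j}(w)=\theta^{\,j-n}(\theta^{n}(w))$ and $f^{j}_{w}=f^{\,j-n}_{\theta^{n}(w)}\circ f^{n}_{w}$ must be handled with care. Once the inequality $n_{j+1}-n_{j}\le n_{1}\circ F^{\,n_{j}}$ is available, the remainder is a routine Birkhoff argument together with a countable intersection. A minor point to nail down is the monotonicity of $\gamma$ (hence of $\gamma^{-1}$), which is what allows the inequality to be pushed through $\gamma$.
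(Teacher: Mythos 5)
Your proof is correct, and it diverges from the paper's in an instructive way. Both arguments hinge on the same key relation between consecutive hyperbolic times, namely $n_{j+1}(w,x)-n_j(w,x)\le n_1\bigl(F^{n_j(w,x)}(w,x)\bigr)$; the paper simply asserts the stronger identity $n_1(F^{n_i}(w,x))=n_{i+1}(w,x)-n_i(w,x)$ without justification, whereas you supply the missing ingredient --- the concatenation property of hyperbolic times, proved by splitting the product in Definition~\ref{timehyp} at the index $j=n$ --- which is exactly what makes that identity (in the needed direction) true. Where you genuinely part ways is in how the integrability of $\gamma^{-1}\circ(rn_1)$ is exploited. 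The paper introduces the sets $L_r(n)=\{n_1\ge r\gamma(n)\}$ and the level sets $Q_m=\{n_1=m\}$, shows $\sum_n\mu(L_r(n))\le\int\gamma^{-1}(n_1/r)\,d\mu<\infty$ by a layer-cake interchange of sums, and concludes by Borel--Cantelli applied to $F^{-n}(L_r(n))$; you instead apply Birkhoff's ergodic theorem to $g_r=\gamma^{-1}\circ(rn_1)$ and use the standard consequence $g_r(F^m(w,x))/m\to0$ $\mu$-a.e.\ to get the eventual bound $n_1(F^m(w,x))\le\gamma(m)/r$ directly. Both routes use only $F$-invariance (not ergodicity) and both require $\gamma$ to be increasing (the paper needs this too, implicitly, when rewriting $\{n:\gamma(n)\le m/r\}$ as $\{n\le\gamma^{-1}(m/r)\}$); your flagging of that point is apt. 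The Borel--Cantelli route is slightly more self-contained in that it only needs summability of tail measures, while your Birkhoff route is shorter once the a.e.\ sublinearity of $g_r\circ F^m$ is granted and yields the quantitative statement ``eventually $(n_{j+1}-n_j)/\gamma(n_j)\le 1/r$'' in one stroke. No gaps.
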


\begin{proof}
 Let $D$ be the set of point for which the sequence $n_{j}( \ )$ fails to be $\gamma-$ nonlacunary. For each $r>0$, define $L_{r}(n):=\{ (w,x)\in X \times M: n_{1}(w,x)\geq r\gamma (n)\}$. If $(w,x)\in D$, then there exists a rational number $r>0$, and there are infinitely many values of $i$ such that $n_{i+1}(w,x)-n_{i}(w,x)\geq r \gamma(n_{i}(w,x))$. This implies that
 \[
  n_{1}((F)^{n_i}(w,x))=n_{i+1}(w,x)-n_{i}(w,x)\geq r\gamma(n_{i}(w,x))
 \]
So, there are arbitrarily large values of $n$ such that $(w,x)\in (F)^{-n}(L_{r}(n))$. In other words, $D$ is contained in the set
\[
 L=\bigcup_{r\in \mathbb{Q}\cap (0, +\infty)}\bigcap_{k=0}^{\infty}\bigcup_{n\geq k}(F)^{-n}(L_{r}(n))
 \]
Since $\mu$ is invariant, we have $\mu((F)^{-n}(L_{r}(n)))=\mu(L_{r}(n))$ for all $n$. Then

\begin{align*}
 \sum_{n=1}^{\infty}\mu(L_{r}(n))= \sum_{n=1}^{\infty}\sum_{n_{1}\geq r\gamma(n)}\mu(Q_{n_1})&=\sum_{m=1}^{\infty}\sum_{n=1}^{[\gamma^{-1}(m/r)]}\mu(Q_m)\\
 &\leq \sum_{m=1}^{\infty}\gamma^{-1}(m/r)\mu(Q_m)
\end{align*}
Thus, using the hypothesis that $\gamma^{-1}(n_{1}/r)$ is integrable,
\[
 \sum_{n=1}^{\infty}\mu(L_{r}(n))\leq \sum_{m=1}^{\infty}\gamma^{-1}(m/r)\mu(Q_m)=\int \gamma^{-1}(n_{1}(w,x)/r)d\mu(w,x)<\infty.
\]
By the Borel-Candelli Lemma, this implies that $L$ has measure zero. It follows that $\mu(D)=\mu(L)=0$, as claimed.

\end{proof}

\begin{corollary}
 \label{seqtempononlacu}
 If $\mu$ is a invariant expanding ergodic measure, then the sequence of hyperbolic times is nonlacunary.
\end{corollary}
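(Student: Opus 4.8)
The plan is to derive the corollary as a direct application of Lemma~\ref{lmlacunary} to the bijection $\gamma:\mathbb{R}^{+}\to\mathbb{R}^{+}$ equal to the identity, $\gamma(t)=t$, since an increasing sequence $(a_k)$ is \emph{nonlacunary} precisely when it is $\gamma$-nonlacunary for this $\gamma$, i.e.\ $\lim_{k\to\infty}(a_{k+1}-a_k)/a_k=0$. First I would note that for this choice $\gamma^{-1}=\mathrm{id}$ as well, so the hypothesis of Lemma~\ref{lmlacunary}---that $\gamma^{-1}\circ(rn_1)$ be $\mu$-integrable for every $r>0$---reduces to asking that $rn_1\in L^{1}(\mu)$ for every $r>0$, that is, to $\int n_1\,d\mu<\infty$.

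Next I would invoke Corollary~\ref{n1integrable}, which is exactly this integrability: since $\mu$ is an invariant expanding (ergodic) measure, Proposition~\ref{expanding} together with the estimate $\mu_w(B_w(n))\leq e^{(\log(\hat q/\bar q)-\varepsilon_0/2)n}$ established in its proof, the inclusion $\{x:\ n_1(w,x)>n\}\subset B_w(n)$, the disintegration $d\mu(w,x)=d\mu_w(x)\,d\mathbb{P}(w)$, and the summation of a geometric series give $\int n_1\,d\mu<\infty$. Here the strict gap in the exponent comes from hypothesis (IV) via the Jacobian bound~\eqref{equadejacobiano1} and from the control on the combinatorial complexity of itineraries through the bad region provided by Lemma~\ref{lemma1}.

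Having verified the hypothesis of Lemma~\ref{lmlacunary} with $\gamma=\mathrm{id}$, the lemma then yields that for $\mu$-a.e.\ $(w,x)\in X\times M$ the sequence $(n_j(w,x))_{j}$ of its hyperbolic times satisfies $\lim_{j\to\infty}\big(n_{j+1}(w,x)-n_j(w,x)\big)/n_j(w,x)=0$, i.e.\ it is nonlacunary, which is the assertion.

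I expect no real obstacle at the level of this corollary: once the correct $\gamma$ is fixed the argument is a one-line reduction to Corollary~\ref{n1integrable}. The hard part sits upstream, in the exponential decay of $\mu_w(B_w(n))$ underlying Corollary~\ref{n1integrable}, where hypothesis (IV) is what prevents the combinatorial complexity of the visits to the contracting/neutral region $\mathcal{A}_w$ from overwhelming the expansion furnished by the Jacobian; ergodicity of $\mu$, for its part, is what will later allow this pointwise sublinear control of the gaps $n_{j+1}-n_j$ to be converted, via Birkhoff's ergodic theorem, into the term $K_\varepsilon(n,(w,x))$ of the nonuniform specification property~\eqref{specificationproper} used in Theorem~\ref{theoremprincipal}.
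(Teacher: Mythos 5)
Your proposal is correct and follows exactly the paper's own argument: take $\gamma(t)=t$ in Lemma~\ref{lmlacunary}, reduce its hypothesis to the integrability $\int n_1\,d\mu<\infty$, and supply that from Corollary~\ref{n1integrable}. The additional remarks you make about the provenance of the exponential decay of $\mu_w(B_w(n))$ are accurate but belong upstream; at the level of this corollary the paper's proof is the same two-line reduction.
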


\begin{proof}
 Observe that $n_{1}$ is integrable by Corollary \ref{n1integrable}. To finish the proof, just put $\gamma(t)=t$ in Lemma \ref{lmlacunary}.
\end{proof}

\section{Proof of theorem A}

\begin{lemma}
 \label{exispontoperio}
 For $\mu$-a.e. $(w,x)\in X\times M$ we have that given $\varepsilon >0$ small enough, if $B_{n, w}(x, \varepsilon)$ is a dynamical ball at $x$ for $f_{w}$, then there exists a return point in $B_{n, w}(x, \varepsilon)$, with ``period'' less than $n+ K(n, (w,x), \varepsilon)$ where 
 \[
  \limsup_{n \rightarrow \infty}\frac{K(n,(w,x),\varepsilon)}{n}=0.
 \]

\end{lemma}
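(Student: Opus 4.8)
The plan is to combine the contraction at hyperbolic times (Lemma~\ref{lemacontra}), the topological exactness~(III) and the nonlacunarity of the sequence of hyperbolic times (Corollary~\ref{seqtempononlacu}) in a Brouwer-type fixed point argument on the fibre $M_w$. First I would restrict attention to the full $\mu$-measure set of points $(w,x)$ which have infinitely many hyperbolic times with exponent $c$ (Lemma~\ref{leminfth}), whose sequence $n_1(w,x)<n_2(w,x)<\cdots$ of hyperbolic times is nonlacunary (Corollary~\ref{seqtempononlacu}) and for which Lemma~\ref{lemacontra} holds; shrinking the $\delta$ of Lemma~\ref{lemacontra} if necessary, I may also assume $\delta<\varepsilon$ and $\delta$ smaller than the injectivity radius of $M$. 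Fix such a point $(w,x)$, and for $n$ large let $n_i=n_i(w,x)$ be the first hyperbolic time $\ge n$. Then $n_i-n\le n_i-n_{i-1}$, which is $o(n)$ by nonlacunarity with $\gamma(t)=t$, so $n_i=n+r(n)$ with $r(n)/n\to0$.

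Put $y:=f^{n_i}_w(x)$. By Lemma~\ref{lemacontra} there is a continuous inverse branch $h_i\colon B_\delta(y)\to M$ of $f^{n_i}_w$ with $h_i(y)=x$ and
\[
d\big(f^{\,n_i-k}_w(h_i(z)),\,f^{\,n_i-k}_w(x)\big)\ \le\ e^{-ck/2}\,d(z,y)\qquad\text{for }z\in B_\delta(y),\ 1\le k\le n_i .
\]
Let $V_i:=h_i(B_\delta(y))$; this is an open topological ball containing $x$, with $\operatorname{diam} V_i\le 2e^{-cn_i/2}\delta$, and (taking $k=n_i-j$ above, and using $f^{n_i}_w\circ h_i=\operatorname{id}$) every $w_0\in V_i$ satisfies $d(f^{j}_w(w_0),f^{j}_w(x))<\delta<\varepsilon$ for $0\le j\le n_i$, hence for $0\le j\le n$ since $n\le n_i$. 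Thus $V_i\subset B_{n,w}(x,\varepsilon)$, and it is enough to exhibit a return point inside $V_i$.

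Apply~(III) with the radius $\delta/2$: there is an integer $s=\tilde n(w,\delta/2)$, \emph{independent of $n$}, with $f^{s}_{\theta^{n_i}(w)}\big(B_{\delta/2}(y)\big)=M$, so we may choose $\bar x\in B_{\delta/2}(y)$ with $f^{s}_{\theta^{n_i}(w)}(\bar x)=x$. Since $M$ is compact, each $f_{\theta^{j}(w)}$ is a proper local homeomorphism, hence a covering map of $M$, and therefore so is $f^{s}_{\theta^{n_i}(w)}$; as $V_i$ is simply connected, there is a continuous section $\tau$ of $f^{s}_{\theta^{n_i}(w)}$ over $V_i$ with $\tau(x)=\bar x$, and by~(I) and~(V) this section is $\hat L_w^{\,s}$-Lipschitz. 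Set $\Psi:=h_i\circ\tau$. For $n$ large, $\operatorname{diam} V_i$ is small enough that $\tau(V_i)\subset B_{\delta/2+\hat L_w^{\,s}\operatorname{diam} V_i}(y)\subset B_\delta(y)$, so $\Psi$ maps $V_i$ continuously into $h_i(B_\delta(y))=V_i$; being Lipschitz it extends to a continuous self-map of the closed topological ball $\overline{V_i}$, and Brouwer's fixed point theorem yields $p\in\overline{V_i}$ with $\Psi(p)=p$. Then
\[
f^{\,n_i+s}_w(p)\ =\ f^{s}_{\theta^{n_i}(w)}\big(f^{n_i}_w(h_i(\tau(p)))\big)\ =\ f^{s}_{\theta^{n_i}(w)}(\tau(p))\ =\ p ,
\]
so $p$ is a return point, $p\in\overline{V_i}\subset B_{n,w}(x,\varepsilon)$, with ``period'' $n_i+s=n+K(n,(w,x),\varepsilon)$ where $K(n,(w,x),\varepsilon):=r(n)+s$; since $r(n)/n\to0$ and $s$ does not depend on $n$, $\limsup_{n\to\infty}K(n,(w,x),\varepsilon)/n=0$. (For the finitely many small values of $n$ one runs the same construction with $n_i$ replaced by any sufficiently large hyperbolic time; this alters $K$ only for those $n$ and so does not change the $\limsup$.)

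The step I expect to be the main obstacle is the last (topological) one: producing an honest fixed point of $f^{\,n_i+s}_w$ inside the dynamical ball. It relies on the existence of the continuous inverse section $\tau$ of the exactness iterate over the \emph{shrinking} ball $V_i$ through the prescribed preimage $\bar x$ of $x$ — where the covering-map structure of the fibre maps and simple connectedness of $V_i$ enter — and on a careful matching of scales ($\delta$, the fixed exactness time $s=\tilde n(w,\delta/2)$, the Lipschitz bound $\hat L_w^{\,s}$, and the threshold on $n$) so that $\Psi$ is genuinely a self-map of $V_i$. It is precisely the estimate $\operatorname{diam} V_i\le 2e^{-cn_i/2}\delta\to0$ from Lemma~\ref{lemacontra}, together with the nonlacunarity bound $n_i\le n+o(n)$, that renders these scales compatible.
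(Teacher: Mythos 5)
Your proof is correct and follows essentially the same route as the paper: contract the dynamical ball to a hyperbolic pre-ball at the first hyperbolic time $n_i\geq n$, use topological exactness to return in a bounded number $s$ of extra iterates, produce a fixed point of $f^{n_i+s}_w$ by Brouwer, and control $K/n$ by nonlacunarity. The only difference is that you make the Brouwer step explicit by building the contracting self-map $\Psi=h_i\circ\tau$ from an inverse branch and a covering-space section, a detail the paper's proof leaves implicit when it passes from the inclusion $V(z)\subset f^{n+K}_w(V(z))$ to the existence of a fixed point.
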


\begin{proof}
Given $(w,x)\in X \times M$ with nonlacunary infinite sequence $(n_k)_k$ of $F-$hyperbolic times (Lemma \ref{lmlacunary} and Corollary \ref{seqtempononlacu}).We can assume that $(w,x)$ is such a point on $Supp(\mu)$, the support of $\mu$. Let $\varepsilon >0$ and $n>1$ be fixed with $\varepsilon < \delta$(given in Lemma \ref{lemacontra}) and let $k$ be such that $n_{k} < n \leq n_{k+1}$.

 \medskip
 For $0\leq h\leq n_{k+1}$, there exists $\gamma_{\varepsilon}:X\rightarrow \mathbb{R}$ with $\gamma_{\varepsilon}:=\gamma_{\varepsilon}(w)<\varepsilon$, using the uniform of the $f_w$, we can to define, for all $x\in M$, with $\gamma_{w}=\min_{0\leq h\leq n_{k+1}} \{r_{h}: \ f^{h}_{w}(B_{w}(x,r_{h}))\subset B_{\theta^{h}(w)}(f^{h}_{w}(x),\varepsilon)\}$. Therefore
 \[
  f^{h}_{w}(B_{w}(x, \gamma_{x}))\subset B_{\theta^{h}(w)}(f^{h}_{w}(x),\varepsilon)
 \]

\medskip
Let $V(z)=f_{w}^{-n_{k+1}}(B(f_{w}^{n_{k+1}}(z),\gamma_{w}))$ be the $f_{w}-$hyperbolic pre-ball around $z$ of length $n_{k+1}$. Since $n_{k+1}$ is a $f_{w}$-hyperbolic time for $z$ and $f_{w}^{-j}$ is a contraction on $B(f_{w}^{n_{k+1}}(z),\gamma_{w})$ for all $1\leq j\leq n_{k+1}$, it follows that $f_{w}^{n_{k+1}}(V(z))= B(f_{w}^{n_{k+1}}(z),\gamma_{w})$ and that $V(z)= B_{n_{k+1},w}(z,\gamma_w)$.

\medskip
As $F$ is topologically exact on the support of $\mu$, then for  $\hat{w}\in X$, there exists $r_{\hat{w}} > 0$ and $N_{\hat{w}}(\varepsilon)\in \mathbb{N}$, such that for all $(\hat{w},x)\in Supp(\mu)$, we have 
\[
B_{\theta^{j}(\hat{w})}(x, r_{\hat{w}})\subset f_{\hat{w}}^{j}(B_{\hat{w}}(z,\gamma_{\hat{w}}))
\] 

for some $j\leq N_{\hat{w}}(\varepsilon)$.

\medskip
Without loss of generality, we may assume that $n$ satisfies $e^{-4cn}\gamma_{w}< r_{w}$ which gives immediately $V(z)= B_{n_{k+1},w}(z,\gamma_w)\subset B(z,r_{w})$. Thus, we conclude that
\[
 V(z)\subset f_{\theta^{n_{k+1}}(w)}^{j}(B(f_{w}^{n_{k+1}}(z), \gamma_{w}))=f_{\theta^{n_{k+1}}(w)}^{j}(f_{w}^{n_{k+1}}(V(z)))
\]
for some $j\leq N(\varepsilon)$. Put $K(n,(w,z),\varepsilon)=n_{k+1}+j-n$. By Brower's Fixed Point Theorem, we have that
\[
 f_{w}^{n+K(n,(w,z),\varepsilon)}:V(z)\rightarrow f_{w}^{n+K(n,(w,z),\varepsilon)}(V(z))
\]
has a fixed point. Using the nonlacunarity of $(n_k)_k$ we get that 

\begin{align*}
\frac{K(n,(w,z),\varepsilon)}{n}&\leq \frac{n_{k+1}-n+N(\varepsilon)}{n} \\
& \leq\frac{n_{k+1}-n_{k}+N(\varepsilon)}{n_k}\to 0, \ \text{when} \ k\to \infty.
\end{align*}

This finishes the proof of this Lemma.
\end{proof}

\section{Applications}
As consequences of Theorem A, in this section we prove that the measure $\mu \in \mathcal{M}_{\mathbb{P}}^{1}(F)$ ergodic non-uniformly expanding and considering $(\mu_w)_{w\in X}$ its disintegration, then we have that $\mu_w$ can be approximated by measurements supported at return points in the weak$^\ast$ topology. Furthermore, it is shown that the average of the measurements disintegrated along the orbit $(\theta^{n}(w))_{n\geq 0}$  in the dynamics of the base converge in the weak$^{\ast}$ topology to a measure supported at return orbit.


\begin{proof} (of Corollary \ref{corola1})

\begin{enumerate}
    \item [i)] Let $\varphi \in C^0(M)$ and for the compactness of $M$, we have $\displaystyle M = \bigcup_{i=1}^{r} B(y_i,\delta/2)$. There exists a partition $\mathcal{P}$ on $M$, induced by the balls $\{B(y_i,\delta/2): \ 1\leq i \leq r \}$, therefore let $\mathcal{P}=\{P_1, \cdots , P_{r_0}  \}$. On the other hand, give $z\in B(y,\delta) \Leftrightarrow d(y,z)< \delta$, then $\lvert \varphi(y) - \varphi (z)\rvert < \dfrac{\varepsilon}{r_0}$.  Now, using theorem \ref{theoremprincipal}, for every $B(y_i,\delta/2)$, there exist $p_{i,w} \in B(y_i,\delta/2)$, such that $f_w^m(p_i)=p_i$ for some $m\in \mathbb{N}$. It defines $\nu_{w,r_0}= \sum_{i=1}^{r_0} k_{i,w} \delta_{p_i,w}$, where $k_{i,w} = \mu_w(P_i)$ for $1\leq i \leq r_0$. Then,
    
    \begin{align*}
        \Bigl| \int \varphi d\mu_w - \int \varphi d\nu_{w,r_0} \Bigr| & = \Bigl| \sum \int_{P_i} \varphi d\mu_w - \sum k_i \varphi(p_{i,w})  \Bigr| \\
        & = \Bigl| \sum \int_{P_i} (\varphi - \varphi(p_{i,w})) \ d\mu_w \Bigr| \\
        & \leq \sum \int_{P_i} \Bigl| \varphi(y) - \varphi(p_{i,w}) \Bigr| \ d\mu_w \\
        & \leq \sum \dfrac{\varepsilon}{r_0} = \varepsilon.
    \end{align*}
    
    \item [ii)] Let $P_{ret}(f_w) := \{p_w \in M: f^m_w (p_w)=p_w \ \text{for some} \ m\in \mathbb{N}\}$ set of return points of $f_w$. By theorem \ref{theoremprincipal}, we know that $P_{ret}(f_w)$ is dense in $M_w = M$. Then we can define the following map
  $$  
    \begin{array}{cccc}
g \ : & \! X & \! \longrightarrow
& \! M \\
& \! w & \! \longmapsto
& \! g(w)\in P_{ret}(f_w)
\end{array}
$$
such that, if $w_n \to w$ in X, then $g(w_n)\to g(w)$ in $M$. So $g$ is measurable, therefore for $\varphi \in C^0 (M)$ we have that

\[
w\in X \longmapsto \varphi(g(w)) = \int_M \varphi \ d\delta_{g(w)} \in \mathbb{R}
\]
is measurable. 

\item [iii)] Let $\displaystyle \nu = \int \nu_{w,r_0} d\mathbb{P}(x)$ and  $\phi \in L^1_{\mathbb{P}}(X,C^0(M))$, then

\begin{align*}
    \Bigl|\int \phi \ d\mu - \int \phi \ d\nu \Bigr| & = \Bigl| \int_X \int_M \phi_w d\mu_w d\mathbb{P}(w) -  \int_X \int_M \phi_w d\nu_{w,r_0} d\mathbb{P}(w) \Bigr| \\
    & \leq \int_X \Bigl| \int_M \phi_w d\mu_w - \int_M \phi_w d\nu_{w,r_0}  \Bigr| d\mathbb{P}(w)\\
    & \leq \int_X \varepsilon \ d\mathbb{P}(w) = \varepsilon.
\end{align*}

We have that $\nu \in \mathcal{M}_{\mathbb{P}}^{1}(X\times M):=$ space of probability measures on $X\times M$ with marginal $\mathbb{P}$ on $X$.

\end{enumerate}

To prove the next result, we will need the following theorem, where the proof can be found in \cite[Theorem 1.5.8]{Arnold1}

\begin{theorem}[Krylov-Bogolioubov procedure for continuous RDS]
\label{theoremKB-RDS}
Let $F$ be a continuous RDS on a Polish space $M$. Define for on arbitrary $\nu \in \mathcal{M}_{\mathbb{P}}^{1}(X\times M)$ 
\begin{align}
 \label{equationtau} 
\tau_n( \ \ )= \dfrac{1}{n} \sum_{i=0}^{n-1}F(i)\nu( \ \ ).
\end{align}
Then every limit point of $\tau_n$ for $n\rightarrow \infty$ in the topology of weak$^\ast$ convergence is in $\mathcal{M}_{\mathbb{P}}(F)$.
\end{theorem}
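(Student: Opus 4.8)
The plan is to run the classical Krylov--Bogolioubov averaging argument in the fibered setting; the only two points that are not purely formal are the weak$^{\ast}$ compactness of $\mathcal{M}^{1}_{\mathbb{P}}(X\times M)$ on the slice of measures with base marginal $\mathbb{P}$, and the stability of the class of admissible test functions under composition with $F$.

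\emph{Compactness and existence of limit points.} First I would show that $\mathcal{M}^{1}_{\mathbb{P}}(X\times M)$ is compact (and metrizable) in the weak$^{\ast}$ topology, so that ``limit point of $\tau_n$'' means ``subsequential weak$^{\ast}$ limit'' and such limits exist. Since $M$ is compact and $\mathbb{P}$ is a Borel probability on the Polish space $X$, it is tight: given $\varepsilon>0$ pick a compact $K\subset X$ with $\mathbb{P}(K)>1-\varepsilon$; then $K\times M$ is compact and $\nu(K\times M)=\nu(\pi_X^{-1}(K))=\mathbb{P}(K)>1-\varepsilon$ for \emph{every} $\nu\in\mathcal{M}^{1}_{\mathbb{P}}(X\times M)$. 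Hence this family is uniformly tight, so relatively compact by Prokhorov's theorem, and it is closed because $\nu\mapsto\pi_{X\ast}\nu$ is weak$^{\ast}$ continuous. Consequently $(\tau_n)$ admits a subsequence $\tau_{n_j}$ converging to some $\tau\in\mathcal{M}^{1}_{\mathbb{P}}(X\times M)$.

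\emph{Preservation of the marginal.} From $\pi_X\circ F=\theta\circ\pi_X$ and $\theta_{\ast}\mathbb{P}=\mathbb{P}$ one gets $\pi_{X\ast}(F^{i}_{\ast}\nu)=\theta^{i}_{\ast}(\pi_{X\ast}\nu)=\mathbb{P}$ for all $i\ge0$, so every $F^i_{\ast}\nu$, hence every convex combination $\tau_n$, lies in $\mathcal{M}^{1}_{\mathbb{P}}(X\times M)$; being a closed set, it also contains the limit $\tau$. For invariance, let $\phi$ be an admissible test function (a bounded Carathéodory function, i.e.\ an element of $\mathbb{L}^{1}_{\mathbb{P}}(X,C^0(M))$). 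Then $\phi\circ F$ is again admissible: $(\phi\circ F)_w(x)=\phi_{\theta(w)}(f_w(x))$ is continuous in $x$, and $\int_X\|(\phi\circ F)_w\|_\infty\,d\mathbb{P}(w)\le\int_X\|\phi_{\theta(w)}\|_\infty\,d\mathbb{P}(w)=\int_X\|\phi_w\|_\infty\,d\mathbb{P}(w)<\infty$ by $\theta$-invariance of $\mathbb{P}$. A telescoping computation gives
\begin{equation*}
\int \phi\circ F\,d\tau_n-\int\phi\,d\tau_n=\frac1n\left(\int\phi\circ F^{n}\,d\nu-\int\phi\,d\nu\right),
\end{equation*}
whose right-hand side is bounded in absolute value by $2\|\phi\|_\infty/n\to0$. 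Evaluating along $\tau_{n_j}\to\tau$ and using that both $\phi$ and $\phi\circ F$ are admissible yields $\int\phi\circ F\,d\tau=\int\phi\,d\tau$ for all such $\phi$, i.e.\ $F_{\ast}\tau=\tau$; together with the previous paragraph, $\tau\in\mathcal{M}_{\mathbb{P}}(F)$, which is the assertion.

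\emph{Main obstacle.} The delicate part is the topological bookkeeping of the first paragraph combined with the right choice of duality: one must work with the weak$^{\ast}$ topology on $\mathcal{M}^{1}_{\mathbb{P}}(X\times M)$ dual to the Carathéodory functions $\mathbb{L}^{1}_{\mathbb{P}}(X,C^0(M))$, check that on the $\mathbb{P}$-marginal slice this topology is metrizable and sequentially compact, and verify that the skew product $F$ --- which is measurable but not necessarily continuous in the base variable $w$ --- still maps this test-function class into itself (the point where $\theta_{\ast}\mathbb{P}=\mathbb{P}$ is essential). Once these structural facts are granted, the preservation of the marginal and the averaging/telescoping estimate are the standard routine steps.
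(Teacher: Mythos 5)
Your proof is essentially correct, but note that the paper does not prove this statement at all: it is quoted verbatim with a pointer to \cite[Theorem 1.5.8]{Arnold1}, so there is no in-paper argument to compare against. What you have written is a self-contained reconstruction of Arnold's proof: the identification of the correct duality (the narrow topology generated by Carath\'eodory test functions in $\mathbb{L}^{1}_{\mathbb{P}}(X,C^0(M))$ rather than jointly continuous functions), the observation that $\phi\circ F$ stays in this class because $f_w$ is continuous fiberwise and $\theta_{\ast}\mathbb{P}=\mathbb{P}$, the preservation of the marginal via $\pi_X\circ F=\theta\circ\pi_X$, and the telescoping estimate are exactly the ingredients of the standard argument. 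This buys the paper a self-contained proof where it currently only has a citation, and your ``main obstacle'' paragraph correctly isolates the one genuinely non-formal point (closure of the test-function class under $F$), which is where the continuity-in-$x$ hypothesis and the $\theta$-invariance of $\mathbb{P}$ are actually used.

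Two small points to tighten. First, in the telescoping estimate the bound $2\lVert\phi\rVert_\infty/n$ presupposes that $\phi$ is uniformly bounded; for a general element of $\mathbb{L}^{1}_{\mathbb{P}}(X,C^0(M))$ the correct bound is
\begin{equation*}
\frac{1}{n}\Bigl(\int_X\lVert\phi_{\theta^{n}(w)}\rVert_\infty\,d\mathbb{P}(w)+\int_X\lVert\phi_{w}\rVert_\infty\,d\mathbb{P}(w)\Bigr)=\frac{2}{n}\lVert\phi\rVert_{1}\to 0,
\end{equation*}
again using $\theta_{\ast}\mathbb{P}=\mathbb{P}$; alternatively, state explicitly that bounded Carath\'eodory functions already generate the narrow topology, so restricting to them loses nothing. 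Second, your compactness paragraph invokes compactness of $M$, whereas the theorem is stated for a Polish $M$; this is harmless here because the statement only asserts that limit points, \emph{if they exist}, are invariant (existence is a separate matter, valid in the paper's compact setting), but you should flag that the first paragraph proves more than the theorem claims and under a stronger hypothesis.
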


We shall show that the measure accumulated by (\ref{equationtau}) is close to the invariant measure $\mu$ in the weak$^\ast$ topology. Indeed, let $\tau\in \mathcal{M}_{\mathbb{P}}(F)$ such that $\tau_n \rightarrow \tau$ in the weak$^\ast$ topology and $\varphi \in C(X\times M)$, then by the invariant of $\mu$ and Corollary \ref{corola1} we have
\[
\Biggl| \int \varphi d F(n)\nu - \int \varphi d\mu \Biggr|= \Biggl| \int \varphi \circ F(n) d\nu - \int \varphi \circ F(n) d \mu \Biggr| < \varepsilon, \quad \forall \ \ n\geq 0,
\]
therefore,

\begin{align*}
    \Biggl|\int \varphi d\tau_n - \int \varphi d\mu   \Biggr| & =  \Biggl|\dfrac{1}{n}\sum_{i=0}^{n-1}\int \varphi d F(i)\nu - \dfrac{1}{n}\sum_{i=0}^{n-1}\int \varphi d\mu   \Biggr| \\
    & \leq \dfrac{1}{n} \sum_{i=0}^{n-1} \Biggl| \int \varphi d F(i)\nu - \int \varphi d \mu   \Biggr| < \varepsilon
\end{align*}
from here,
\[
\Biggl|\int \varphi d\tau - \int \varphi d\mu   \Biggr| < \varepsilon. 
\]
with this we finish the proof.
\end{proof}



\medskip

\begin{proof} (of Corollary \ref{corola2}) 
\begin{enumerate}
    \item 

Suppose that $F:X\times M \to X\times M$ is a system ergodic, $\varphi \in C^{0}(M)$ and let's $\psi \in L_{\mathbb{P}}^{1}(X,C^{0}(M))$ such that $\psi_{w} = \varphi \ \forall \ w\in X$. So, using Birkhoff's Ergodic Theorem for $F$, we get that
\[
\displaystyle \lim_{n\to \infty}\dfrac{1}{n}\sum_{j=0}^{n-1}\varphi(f^{j}_{w}(y))= \int_{X\times M}\varphi d\mu= \int_{X}\int_{M}\varphi d\mu_{w}d\mathbb{P}, \quad \mu-a.e.p.
\]
equivalently, for all $\varepsilon >0$, there exist $n_0 \in \mathbb{N}$ such that $n > n_0$
\[
\left\lvert \dfrac{1}{n}\sum_{j=0}^{n-1}\varphi(f^{j}_{w}(y)) -  \int_{X}\int_{M}\varphi d\mu_{w}d\mathbb{P} \right\rvert < \dfrac{\varepsilon}{2},  \quad \mu-a.e.p..
\]
On the other hand, give $p\in B(n,(w,y),\delta) \Leftrightarrow d(f^{j}_{w}(y), f^{j}_{w}(p))< \delta \quad \forall \ 0\leq j \leq n-1$, then $\lvert \varphi(f^{j}_{w}(y)) - \varphi (f^{j}_{w}(p))\rvert < \dfrac{\varepsilon}{2} \quad \forall \ 0\leq j \leq n-1$. Therefore
\[
\left\lvert \dfrac{1}{n}\sum_{j=0}^{n-1}\varphi(f^{j}_{w}(y)) - \dfrac{1}{n}\sum_{j=0}^{n-1}\varphi(f^{j}_{w}(p))\right\rvert < \dfrac{\varepsilon}{2}.
\]
Now, taking the difference between the Birkhoff's sums, through the orbit of $p$ and using (\ref{specificationproper}), we get
\begin{align*}
       & \left\lvert \dfrac{1}{n+k}\sum_{j=0}^{n+k-1}\varphi(f^{j}_{w}(p)) - \dfrac{1}{n}\sum_{j=0}^{n-1}\varphi(f^{j}_{w}(p))\right\rvert = \\
       & = \left\lvert \dfrac{1}{n+k}\sum_{j=0}^{n+k-1}\varphi(f^{j}_{w}(p)) - \dfrac{1}{n}\sum_{j=0}^{n-1}\varphi(f^{j}_{w}(p)) - \dfrac{1}{n}\sum_{j=n}^{n+k-1}\varphi(f^{j}_{w}(p)) + \dfrac{1}{n}\sum_{j=n}^{n+k-1}\varphi(f^{j}_{w}(p)) \right\rvert\\
    &  = \left\lvert \dfrac{1}{n}\sum_{j=0}^{n+k-1}\varphi(f^{j}_{w}(p)) - \dfrac{1}{n+k}\sum_{j=0}^{n+k-1}\varphi(f^{j}_{w}(p)) - \dfrac{1}{n}\sum_{j=n}^{n+k-1}\varphi(f^{j}_{w}(p)) \right\rvert \\
    & = \left\lvert \dfrac{n+k}{n}\dfrac{1}{n+k}\sum_{j=0}^{n+k-1}\varphi(f^{j}_{w}(p)) - \dfrac{1}{n+k}\sum_{j=0}^{n+k-1}\varphi(f^{j}_{w}(p)) - \dfrac{1}{n}\sum_{j=n}^{n+k-1}\varphi(f^{j}_{w}(p)) \right\rvert \\
    & = \left\lvert \left(\dfrac{n+k}{n}-1\right)\dfrac{1}{n+k}\sum_{j=0}^{n+k-1}\varphi(f^{j}_{w}(p)) - \dfrac{1}{n}\sum_{j=n}^{n+k-1}\varphi(f^{j}_{w}(p)) \right\rvert \\
    & \leq \dfrac{2k}{n}\lVert \varphi \rVert_{\infty} \to 0, \quad \text{when} \ n\to \infty.
\end{align*}

Let's set  $$\displaystyle \mu_{p,w,n+k}=\frac{1}{n+k}\sum_{j=0}^{n+k-1}\delta_{f^{j}_{w}(p)} \ \ \text{with} \ \ f^{n+k}_{w}(p)=p,$$   we obtain
\[
\displaystyle \int_{M} \varphi d\mu_{p,w,n+k} = \dfrac{1}{n+k}\sum_{j=0}^{n+k-1}\varphi(f^{j}_{w}(p)).
\]

Hence

\[
\left\lvert \int_{M} \varphi d\mu_{p,w,n+k} -  \int_{X}\int_{M}\varphi d\mu_{w}d\mathbb{P} \right\rvert < \dfrac{\varepsilon}{2},  \quad \mu-a.e.p..
\]
in other words,

\[
\displaystyle \lim_{n\to \infty} \int_{M}\varphi d\mu_{p,w,n+k} =  \int_{X}\int_{M}\varphi d\mu_{w}d\mathbb{P}.
\]

\item
Being $\mathbb{P}$ an ergodic measure and the function $\displaystyle w\in X \mapsto \int_{M}\varphi d\mu_w \in \mathbb{R}$ mesuarable, for all $\varphi \in C^{0}(M)$. Then, applying Birkhoff's Ergodic Theorem into $\theta:X\to X$, we have
\[
\displaystyle \lim_{n\to \infty}\dfrac{1}{n}\sum_{j=0}^{n-1}\int_{M}\varphi d\mu_{\theta^{j}(w)} = \int_{X}\int_{M}\varphi d\mu_{w}d\mathbb{P}, \quad \mathbb{P}-a.e.p.
\]
moreover, for all $\frac{\varepsilon}{2}>0$, existe $n_0 \in \mathbb{N}$ such that to $n > n_0$
\[
\left \lvert \dfrac{1}{n}\sum_{j=0}^{n-1}\int_{M}\varphi d\mu_{\theta^{j}(w)} -  \int_{M}\varphi d\mu_{p,w,n+k} \right \rvert < \varepsilon, \quad \mu-a.e.p.
\]
equivalently,
\[
\left \lvert \dfrac{1}{n}\sum_{j=0}^{n-1} \mu_{\theta^{j}(w)} -  \mu_{p,w,n+k} \right \rvert < \varepsilon, \quad \mu-a.e.p. 
\]
in the $weak^{\ast}$-topology. 
\end{enumerate}
\end{proof}

\section{Examples}
In this section we will apply Theorem \ref{theoremprincipal} and the Corollaries \ref{corola1} and \ref{corola2} to certain classes of nonuniformly expanding maps. We have that the physical measures obtained in \cite{Araujo} is supported in the averaged sequence of Dirac probability measures along of orbits of return points.
\begin{example}
\label{example1}\normalfont{
	Let $f_{0},f_{1}: M\rightarrow M$ be $C^1$ local diffeomorphisms of a compact and connected manifold $M$ satisfying our conditions (I)-(V). For $1\leq k < \dim M=d$ suppose that $\log \lVert \Lambda^{k} Df_1\rVert < \log \deg f_1$ and consider $$C_{k}(w,x)=\limsup_{n\rightarrow +\infty} \frac{1}{n}\log \|\Lambda^{k}Df_{w}^{n}(x)\|\quad \mbox{and} \quad	C_{k}(w)=\max_{x \in M}C_{k}(w,x).$$
	Let $\mathbb{P}_\alpha$ be the Bernoulli measure on the sequence space $X=\{0,1\}^\mathbb{Z}$ such that $\mathbb{P}_\alpha([1])= \alpha$.
	In  \cite{Alvarez1} was proved the existence of $\alpha\in (0, 1)$ close to $1$ such that
		\begin{eqnarray*}\int\lim_{n\to \infty} \frac{1}{n}\log \lVert \Lambda^{k} Df^{n}_{w}(x)\rVert\ d\mathbb{P}_{\alpha}(w)&<& \alpha \log \deg(f_1) + (1-\alpha)\log \deg (f_0) \\
		&=& \int \log \deg (f_w)\  d\mathbb{P}_{\alpha}(w).
			\end{eqnarray*}
for every $x\in M.$ The rest of the hypotheses (H1)-(H3), follows directly from the fact that functions $f_0, f_1$ are of class $C^1$ and $M$ is compact and connected. }
\end{example}




\begin{example}
Let $M^l$ be a compact $l$-dimensional Riemannian manifold and $\mathcal{D}$ the space of $C^2$ local diffeomorphisms on $M$. Let $(\Omega, T, \mathbb{P})$ be a measure preserving system where $\mathbb{P}$ is ergodic. Define the skew-product by
$$
\begin{array}{cccc}
F\ : & \! \Omega\times M & \! \longrightarrow
& \!\Omega\times M \\
& \! (w,x) & \! \longmapsto	 & \! (T(w),f(w)x)
\end{array}
$$ 
where the maps $f(w)\in \mathcal{D}$ varies continuously on $w\in \Omega$. Fixing positive constants $\delta_0, \delta_1$ small and $p, q \in \mathbb{N}$, satisfying for every $f(w)\in \mathcal{D}$ the following properties:
\begin{enumerate}
    \item [(1)] There exists a covering $B_1,...,B_p,...,B_{p+q}$ of $M$ by injectivity domains s.t.
    \begin{itemize} 
    \item $\|Df(x)^{-1}\|\leq (1+\delta_1)^{-1}$ for every $x\in B_1 \cup \cdots \cup B_p$.
    \item  $\|Df(x)^{-1}\|\leq (1+\delta_0)$ for every $x\in M$.
    \end{itemize}
 \item [(2)] $f$ is everywhere volume expanding: $|\det Df(x)| \geq \sigma_1$ with $\sigma_1 > q$.
 \item [(3)] There exists $A_0$ s.t. $|\log \|f\|_{C^2} | \le A_0$ for any $f\in \mathcal{F} \subset \mathcal{D}$.
\end{enumerate}

 Adding other technical hypotheses, the authors in \cite{Oliveira2} have showed that theses measures are non-uniformly expanding. The generating functions $f_w\in \mathcal{D}$, so you satisfy hypotheses (H1) - (H3).
\end{example}

\begin{example}
\label{example2} As last example, we have the dynamic of Alves and Ara\'ujo in \cite{Araujo}. They work a random perturbation of map $f:M\rightarrow M$ for nonuniformly expanding maps either with or without critical sets. It is considered a continuous map
$$
\Phi: T \rightarrow C^{2}(M,M) \quad \text{as} \quad t\mapsto f_t
$$
from a metric space $T$ into the space of $C^2$ map from $M$ to $M$, with $f= f_{t^*}$ for some fixed $t^* \in T$. Given $x\in M$ the sequence $(f^{n}_{\hat{t}}(x))_{n \geq 1}$ is random orbit of $x$, where $\hat{t}:= (t_1, t_2,...) \in T^{N}$ and
$$
f^{n}_{\hat{t}}= f_{t_n}\circ f_{t_{n-1}}\circ \cdots \circ f_{t_1} \quad \text{for} \quad n \geq 1,
$$
also is taked a family $(\theta_{\varepsilon})_{\varepsilon > 0}$ of probability measures on $T$ such that $(supp \ \theta_\varepsilon)_{\varepsilon >0}$ is nested family of connected compact sets and $supp \ \theta_\varepsilon \rightarrow {t^\ast}$ when $\varepsilon \to 0$. \\
In the context of random perturbations of a map, we say that a Borel probability measure $\mu^\varepsilon$ on $M$ is physical if for a positive Lebesgue measure set of points $x\in M$, the averaged sequence of Dirac probability measures $\delta_{f^{n}_{\hat{t}}(x)}$ along random orbit $(f^{n}_{\hat{t}}(x))_{n \geq 1}$ converge in the weak$^\ast$ topology to $\mu^\varepsilon$ for $\theta^{N}_{\varepsilon}$ almost every $\hat{t}\in T^N$. That is,
$$
\lim_{n\to +\infty} \frac{1}{n}\sum_{j=0}^{n-1}\varphi(f^{j}_{\hat{t}}(x))=\int \varphi d\mu^{\varepsilon} \quad \text{for all} \quad \varphi \in C^{O}(M, \mathbb{R})
$$
and $\theta^{N}_\varepsilon$ almost every $\hat{t}\in T^N$.
Let $F:T^{N}\times M \rightarrow T^{N}\times M$ defined $F(t,x):= (\sigma(\hat{t}), f_{t_1}(x))$, where $\sigma$ is the left shift. In \cite{Araujo} it is proven that for $f:M\to M$ be a $C^2$ nonuniformly expanding local diffeomorphism and $\varepsilon >0$ sufficiently small. Then there are physical measures $\mu_1^\varepsilon,...,\mu_l^\varepsilon$ such that: for each $x\in M$ and $\theta^{N}_{\varepsilon}$ a. e. $\hat{t}\in T^N$, the average of Dirac measures $\delta_{f_{\hat{t}}^{n}(x)}$ converges in weak$^\ast$ topology to some $\mu_i^\varepsilon$ with $1\leq i \leq l$. If $f$ is topologically transitive, then $l=1$.
\medskip
Now, in case particular that $f$ satisfies (I) - (V)  and $\theta^{N}_{\varepsilon}\times \mu$ a expanding measure on $F$, the applying corolario \ref{corola2} we obtain, that some physical measures  $(\mu_i^{\varepsilon})_{1\leq i\leq l}$ is the weak$^{\ast}$ limit of Dirac measures at point of return on the fiber $t^\ast$.
\end{example}

\section*{Acknowledgements}
The author would like to thank K. Oliveira for pointing out this problem  and E. Santana for comments and for useful conversations. We thank also IM-UFAL, Brazil for the hospitality and the opportunity to develop part this work.


\medskip

\vspace{0.5cm}

\end{document}